\providecommand{\keywords}[1]{\textbf{Keywords.} #1}
\providecommand{\MSC}[1]{\textbf{AMS Subject Classifications.} #1}
\newtheorem{theorem}{Theorem}[section]
\newtheorem{corollary}[theorem]{Corollary}
\newtheorem{lemma}[theorem]{Lemma}
\newtheorem{proposition}[theorem]{Proposition}
\newtheorem{example}[theorem]{Example}
\theoremstyle{definition}
\newtheorem{definition}[theorem]{Definition}
\newtheorem{remark}[theorem]{Remark}
\renewcommand{\epsilon}{\varepsilon}
\newcommand{\C}{\field{C}\xspace}
\newcommand{\R}{\field{R}\xspace}
\newcommand{\Herm}{\field{H}\xspace}
\newcommand{\field}[1]{\ensuremath{\mathbb{#1}}}
\newcommand{\ens}[1]{ \left\{#1\right\} }
\newcommand\loc{{\mathrm{loc}}}
\newcommand\abs[1]{\left|#1\right|}
\def\norm#1{\left\|#1\right\|}
\newcommand\Id{\mathrm{Id}}
\newcommand\clos[1]{\overline{#1}}
\newcommand\Trace{\mathrm{Trace} \,}
\newcommand\MA{\mathrm{MA}}
\newcommand\PSH{\mathrm{PSH}}
\newcommand\real{\mathfrak{Re} \,}
\newcommand\imag{\mathfrak{Im} \,}
\newcommand\diam{\mathrm{diam} \,}
\newcommand\ddc{dd^c \,}
\newcommand\dV{dV}
\newcommand\DD{D^2}
\newcommand\coneb{\mathcal{C}}
\DeclareMathOperator{\la}{\lambda}
\def\Om{\Omega}
\def\B{B_1}
\def\aij{a^{i\bar{j}}}
\title{Uniform estimates for concave homogeneous complex degenerate elliptic equations comparable to the Monge-Ampère equation}
\author{
Soufian Abja\thanks{Institute of Mathematics, Jagiellonian University, Lojasiewicza 6,
30-348 Krakow, Poland (\texttt{Soufian.Abja@im.uj.edu.pl}, \texttt{Slawomir.Dinew@im.uj.edu.pl}, \texttt{math.golive@gmail.com}).}
\and
S{\l}awomir Dinew\footnotemark[1]
\and
Guillaume Olive\footnotemark[1]
}
\date{\today}
\begin{document}

\maketitle

\begin{abstract}
We prove sharp uniform estimates for strong supersolutions of a large class of fully nonlinear degenerate elliptic complex equations. Our findings rely on ideas of Kuo and Trudinger who dealt with degenerate linear equations in the real setting. We also exploit the pluripotential theory for the complex Monge-Amp\`ere operator as well as suitably tailored theory of $L^p$-viscosity subsolutions.
\end{abstract}

\keywords{nonlinear degenerate elliptic equations, viscosity solutions, a priori estimates}

\vspace{0.2cm}
\MSC{35J70, 35D40, 32U15}


\section{Introduction}

Various {\it maximum principles} play pivotal role in the study of elliptic second order equations.
In the case of a uniformly elliptic equation\footnote{We shall use the sign convention so that $F(D^2u)=\Delta u$ in the case of the Laplace operator.}
$$F(D^2u)=f,$$
the basic version of the maximum principle says that $f>0$ implies that a (suitably regular) solution  $u$ does not achieve strict  local maximum.  
More quantitative versions  are also widely studied in the literature and we refer to \cite{PS07} for the details.
One of the cornerstones in this field is the Alexandrov-Bakelman-Pucci estimate which yields a uniform bound on $u\in C^2(\Om)\cap C^0(\clos{\Om})$ ($\Omega \subset \R^n$ is a bounded domain) solving
$$F(D^2u)\leq f,$$
with $F$ being a uniformly elliptic second order operator and $f \in C^0(\clos{\Om})$.
If $u\geq 0$ on $\partial\Om$ it reads

$$
\sup _{\Om} \, (-u)\leq C(n,\Om,F)\norm{f_+}_{L^n(\Om)},
$$
where $f_+=\max (f,0)$ denotes the positive part of $f$.
In this article, we shall denote by $C(n), C(n,\Om)$, etc. a positive number that may change from line to line but that depends only on the quantities indicated between the brackets.

Various generalizations to {\it strong} solutions\footnote{We recall that a strong solution is a function that belongs to $W^{2,r}_{\loc}(\Om)$ for some $r \geq 1$ and that satisfies the corresponding equation almost everywhere.} in $W^{2,n}(\Om)$ (see \cite{CW98}) or to viscosity solutions (see e.g. \cite[Theorem 3.2]{CC95}) are plentiful in the literature.

When the equation {\it fails} to be uniformly elliptic maximum principle still holds under reasonable minimal conditions, see \cite{RS64} and references therein.
When it comes to quantitative estimates for {\it degenerate elliptic equations} the available results are considerably more restrictive. In \cite{KT07}, the authors established the following estimate generalizing the Alexandrov-Bakelman-Pucci estimate:

\begin{theorem}\label{KuoTr}
Let $\Om \subset \R^n$ be a bounded domain.
Let
$$
Lu=\sum_{i,j=1}^n a^{ij}(x) u_{ij},
$$
be a second order linear operator with the coefficient matrix $A=(a^{ij})_{1 \leq i,j \leq n}$ being symmetric and positive definite.
We assume that $\rho_k^*(A)>0$ for some $k \in \ens{1,\ldots,n}$, where
$$\rho_k^*(A)=\inf\ens{\frac{1}{n}\Trace(A M) \quad \middle|\quad \sigma_k(\lambda(M))\geq\binom nk,  \quad \sigma_{\ell}(\lambda(M))>0, \quad \forall  \ell \in \ens{1,\cdots,k}},$$
where $\sigma_{\ell}$ is the $\ell$-th elementary symmetric polynomial and $\lambda(M) \in \R^n$ is the vector of eigenvalues of $M$.
Let $q \geq 1$ be such that
$$
\begin{cases}
q=k & \text{ if } k>n/2, \\
q>n/2 & \text{ if } k\leq n/2,
\end{cases}
$$
and $f$ be such that $f/\rho_k^*(A) \in L^q(\Omega)$.
Then, for any function $u\in W^{2,q}_{\loc}(\Om)\cap C^0(\clos{\Om})$ that satisfies
$$
\begin{cases}
\begin{aligned}
Lu & \leq f && \mbox{ in } \Omega, \\
u & \geq 0 && \mbox{ on } \partial\Omega,
\end{aligned}
\end{cases}
$$
we have
$$
\sup_{\Om} \, (-u) \leq C(n, \Om, q) \norm{\frac{f}{\rho_k^*(A)}}_{L^q(\Om)}.$$
\end{theorem}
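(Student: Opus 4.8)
The plan is to run the classical normal-mapping proof of the Aleksandrov--Bakelman--Pucci estimate, the one new ingredient being that on the lower contact set of $u$ we bound the $k$-th elementary symmetric function of $D^2u$ --- not the full Hessian determinant --- using the very definition of $\rho_k^*(A)$, and then upgrade this into the stated bound via an Aleksandrov-type maximum principle for the degenerate concave Hessian operator $\sigma_k^{1/k}$. By a standard approximation --- mollifying $u$ on exhausting subdomains, or passing to its convex envelope $\Gamma_u$, which is $C^{1,1}$ and satisfies $0\le D^2\Gamma_u\le D^2u$, hence $\sigma_k(\lambda(D^2\Gamma_u))\le\sigma_k(\lambda(D^2u))$, a.e.\ on the contact set --- it suffices to treat $u\in C^2(\Om)\cap C^0(\clos\Om)$. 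We may assume $M:=\sup_\Om(-u)>0$, so that $-u$ attains its maximum at an interior point, and, after a translation, $\Om\subseteq B_d(0)$ with $d:=\diam\Om$.

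Let $\Gamma^-\subseteq\Om$ be the lower contact set of $u$: the set of points at which some affine function touches $u$ from below on $\Om$. Classically $D^2u\ge 0$ a.e.\ on $\Gamma^-$ and the gradient map satisfies $B_{M/d}(0)\subseteq Du(\Gamma^-)$, so $\omega_n(M/d)^n\le\int_{\Gamma^-}\det D^2u\,dx$ with $\omega_n=|B_1|$; moreover $Lu=\Trace(A\,D^2u)\ge 0$ on $\Gamma^-$ (since $A,D^2u\ge 0$), hence $0\le Lu\le f_+$ there. Now fix $x\in\Gamma^-$ with $\operatorname{rank}D^2u(x)\ge k$: then $D^2u(x)\ge 0$ has at least $k$ positive eigenvalues, so $\sigma_\ell(\lambda(D^2u(x)))>0$ for all $\ell\le k$, and the normalized matrix $\widetilde P:=\binom{n}{k}^{1/k}\sigma_k(\lambda(D^2u(x)))^{-1/k}D^2u(x)$ is admissible in the infimum defining $\rho_k^*(A(x))$. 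Hence $\tfrac1n\Trace(A(x)\widetilde P)\ge\rho_k^*(A(x))$, which, using $\Trace(A(x)D^2u(x))=Lu(x)\le f_+(x)$, rearranges to $\sigma_k(\lambda(D^2u(x)))\le\binom{n}{k}\bigl(f_+(x)/(n\,\rho_k^*(A(x)))\bigr)^k$. Where $\operatorname{rank}D^2u<k$ the left side vanishes, and since $\rho_k^*(A)>0$ we obtain, a.e.\ on $\Gamma^-$,
\[
\sigma_k(\lambda(D^2u))\le\binom{n}{k}\,g^k,\qquad g:=\frac{f_+}{n\,\rho_k^*(A)}.
\]

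It remains to turn this pointwise inequality (together with $D^2u\ge 0$ on $\Gamma^-$) into $M\le C(n,\Om,q)\norm{g}_{L^q(\Om)}$. Plugging it into the crude covering bound above yields only the non-sharp exponent $q=n$, through Maclaurin's inequality $\det D^2u=\sigma_n\le(\sigma_k/\binom{n}{k})^{n/k}\le g^n$ on $\Gamma^-$. To reach the sharp $q$ one invokes instead the Aleksandrov-type maximum principle for the concave degenerate operator $\sigma_k^{1/k}$,
\[
\sup_\Om(-u)\le C(n,k,\Om,s)\,\norm{\sigma_k(\lambda(D^2u))\,\mathbf 1_{\Gamma^-}}_{L^s(\Om)}^{1/k},
\]
valid for every $s\ge\max\{1,\,n/(2k)\}$ (with equality admissible precisely when $2k>n$), and then estimates $\norm{\sigma_k(\lambda(D^2u))\mathbf 1_{\Gamma^-}}_{L^s}^{1/k}\le\binom{n}{k}^{1/k}\norm{g}_{L^{ks}}$; taking $s=\max\{1,n/(2k)\}$ gives exactly $q=ks=\max\{k,n/2\}$ with the asserted (non-)strictness. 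The proof of this maximum principle is itself a generalized normal-mapping argument: for $2k\ge n$ one compares $u$ on $\Gamma^-$ with the cone through its minimum point --- whose $k$-Hessian mass is comparable to $M^kd^{\,n-2k}$ --- via the comparison principle for $k$-Hessian measures; for $2k<n$ that cone carries negligible mass at small scales, so one localizes, iterates the previous case over dyadic subdomains, and sums, at the cost of an arbitrarily small loss of integrability.

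\emph{The main obstacle} is this last step. The reductions and the pointwise $\sigma_k$-bound are essentially the linear algebra forced by the definition of $\rho_k^*$, and the crude covering estimate cannot do better than $q=n$; the whole gain, and the sharp dependence on whether $k$ exceeds $n/2$, comes from the Aleksandrov-type maximum principle for $\sigma_k^{1/k}$, which is where the nondegeneracy encoded by $\rho_k^*(A)>0$ is genuinely used. It is exactly this ingredient that, in the complex framework of the remaining sections, must be rebuilt: the role of $k$-Hessian potential theory is there taken over by pluripotential theory for the complex Monge--Amp\`ere operator, and the covering/iteration part by the tailored $L^p$-viscosity theory announced in the abstract.
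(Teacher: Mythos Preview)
The paper does not prove this theorem: it is quoted verbatim from \cite{KT07} as background and motivation, with no proof given. There is therefore nothing in the paper to compare your proposal against.

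For what it is worth, your sketch is a faithful reconstruction of the Kuo--Trudinger argument: the pointwise bound $\sigma_k(\lambda(D^2u))\le\binom{n}{k}(f_+/(n\rho_k^*(A)))^k$ on the lower contact set is exactly what the definition of $\rho_k^*$ is designed to produce (this is their Proposition~2.1), and the sharp exponent then comes from the Aleksandrov-type maximum principle for the $k$-Hessian operator. Your closing paragraph correctly identifies the analogy with what the present paper does in the complex setting, where Ko\l odziej's estimate plays the role of the $k$-Hessian maximum principle.
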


In this note we shall investigate the complex analogues of Theorem \ref{KuoTr} and their generalizations to nonlinear complex equations. In such a setting it is well-known that direct application of real tools, such as Theorem \ref{KuoTr}, leads to non-optimal bounds in terms of the exponent $q$ (see \cite{Wan12,DD20}).
Instead, building on a fundamental theorem of Ko\l odziej \cite{Kol98} we are able to establish a fairly sharp  Alexandrov-Bakelman-Pucci type estimates.

\subsection{Assumptions on the class of nonlinear equations}\label{sect hyp}

Let us now detail the class of nonlinear complex elliptic equations that is considered all along this article.

From now on, $\Om \subset \C^n$ is a bounded domain.
The Euclidean norm of $z \in \C^n$ will be denoted by $\norm{z}$.
The open ball of center $z \in \C^n$ and radius $R>0$ will be denoted by $B_R(z)$.

Let $\Herm^n$ denote the set of all Hermitian $n\times n$ matrices and let us introduce the classical cone
$$
\coneb_n=\ens{A \in \Herm^n \quad \middle| \quad A>0}.
$$

In what follows we shall be interested in families $(\Gamma(z))_{z \in \Omega} \subset \Herm^n$ of open convex cones subject to the following condition:
\begin{equation}\label{cones range}
\coneb_n\subset\Gamma(z), \quad \forall z \in \Omega.
\end{equation}

Examples will shortly be presented in Section \ref{sect ex Hess} below.

For the rest of this article, $(\Gamma(z))_{z \in \Omega} \subset \Herm^n$ is now a fixed family of open convex cones satisfying \eqref{cones range}.
Let us then introduce the set
$$\Sigma=\ens{(z,A) \quad \middle| \quad z \in \Omega, \quad A \in \Gamma(z)}.$$
This set is clearly nonempty as it contains $(z,\Id)$ for any $z \in \Omega$ by \eqref{cones range}.

All along this work, we shall consider operators
$$G:\Sigma \longrightarrow \R,$$
(and $F=G^k$, $k>0$, for nonnegative $G$) subject to the following conditions:
\begin{enumerate}[(a)]
\item\label{reg F}
{\bf Regularity:} 
$\Sigma$ is measurable subset of $\Omega\times\Herm^n$ and $G$ is a measurable function on $\Sigma$.
Furthermore, for a.e. $z \in \Omega$, we have $A \mapsto G(z,A) \in C^1(\Gamma(z))$.

\item\label{F hom}
{\bf Homogeneity:} for a.e. $z \in \Omega$, the function $A \in \Gamma(z) \longmapsto G(z,A)$ is positively homogeneous of degree $1$ (that is $G(z,\alpha A)=\alpha G(z,A)$ for every $\alpha>0$ and $A \in \Gamma(z)$).

\item\label{concavity}
{\bf Concavity:} for a.e. $z \in \Omega$, the function $A \in \Gamma(z) \longmapsto G(z,A)$ is concave.

\item\label{comparison}
{\bf Comparison:}
for a.e. $z \in \Omega$, we have
\begin{equation}\label{comparison with det}
G(z,P) \geq (\det(P))^{\frac{1}{n}}, \quad \forall P \in \coneb_n.
\end{equation}
\end{enumerate}

Once again, some examples will be presented in Section \ref{sect ex Hess} below.

The key assumption is \ref{comparison}, which will allow a comparison with the Monge-Ampère equation.
Note that we do not require any regularity with respect to $z$.

\begin{remark}
The assumptions \ref{reg F}, \ref{F hom}, \ref{concavity} and \ref{comparison} are stable by finite convex combination.
More precisely, if $G_1,\ldots,G_\ell$ are operators satisfying these assumptions, then so is $G(z,A)=\sum_{i=1}^\ell \alpha_i(z) G_i(z,A)$ for any measurable functions $\alpha_i:\Omega \rightarrow \R$ with $\alpha_i \geq 0$ and $\sum_{i=1}^\ell \alpha_i=1$.
\end{remark}

In what follows, we will use the standard notation
$$G^{i\bar{j}}(z, A)=\frac{\partial G}{\partial a_{i\bar{j}}}(z,A), \quad 1 \leq i,j \leq n.$$

\begin{remark}\label{rem hyp comp with MA}
Inspired from an argument of \cite[p. 269]{CNS84}, we see that:
\begin{itemize}
\item
Thanks to the homogeneity assumption \ref{F hom}, the concavity assumption \ref{concavity} is equivalent to the following property, which will play the role of a substitute to \cite[Proposition 2.1]{KT07}:
\begin{enumerate}[(c')]
\item\label{lin concave}
For every $A \in\Gamma(z)$ and $B=(B_{i\bar{j}})_{1 \leq i,j \leq n} \in \Gamma(z)$, we have
\begin{equation}\label{basicineq}
\sum_{i,j=1}^n G^{i\bar{j}}(z,A) B_{i\bar{j}}
\geq G(z, B).
\end{equation}
\end{enumerate}
Indeed, introducing $DG(A)B=\sum_{i,j=1}^n G^{i\bar{j}}(z,A) B_{i\bar{j}}$, the concavity is equivalent to the inequality $DG(A) B \geq G(z,B)-G(z,A)+DG(A) A$ and we have the identity $DG(A) A=G(z,A)$ (obtained by differentiating $G(z,\alpha A)=\alpha G(z,A)$ with respect to $\alpha$ and taking $\alpha=1$).

\item
Thanks to the assumptions \ref{F hom} and \ref{concavity}, $G$ satisfies \ref{comparison} if, and only if,
\begin{enumerate}[(d')]
\item\label{not UE}
For every $A \in \Gamma(z)$ and $P \in \coneb_n$, we have
$$G(z,A+P) \geq G(z,A)+\left(\det(P)\right)^{\frac{1}{n}}.$$
\end{enumerate}
Indeed, the concavity inequality $G(z,A+P)-G(z,A) \geq DG(A+P) P$, \ref{lin concave} and \ref{comparison} imply \ref{not UE}; the converse is proved taking $A=P$ in \ref{not UE} and using the homogeneity.

In particular, note that our assumptions guarantee that $G$ is elliptic.
\end{itemize}
\end{remark}

\subsection{Main result and comments}

From now on, $D^2 u=\left(u_{i \bar{j}}\right)_{1 \leq i,j \leq n}$ denotes the complex Hessian of $u$, where we use the standard notations $u_j$ and $u_{\bar{j}}$ to denote, respectively, $\frac{\partial{u}}{\partial z_j}=\frac{1}{2}\left(\frac{\partial{u}}{\partial x_j}-i\frac{\partial{u}}{\partial y_j}\right)$ and $\frac{\partial{u}}{\partial \bar{z}_j}=\frac{1}{2}\left(\frac{\partial{u}}{\partial x_j}+i\frac{\partial{u}}{\partial y_j}\right)$.

The main result of the present paper is the following:

\begin{theorem}\label{main thm}
Let $(\Gamma(z))_{z \in \Omega} \subset \Herm^n$ be a family of open convex cones satisfying \eqref{cones range} and let $G$ be an operator satisfying \ref{reg F}, \ref{F hom}, \ref{concavity} and \ref{comparison}.
Let
\begin{equation}\label{cond rp}
r>n, \quad p>n.
\end{equation}
Let $g \in L^p(\Omega)$.
Then, for any function $u \in W^{2,r}_{\loc}(\Om)\cap C^0(\clos{\Om})$ with $(D^2 u)(z) \in \Gamma(z)$ for a.e. $z \in \Omega$ and that satisfies
$$
\begin{cases}
\begin{aligned}
G(z,\DD u) & \leq g && \mbox{ in } \Omega, \\
u & \geq 0 && \mbox{ on } \partial\Omega,
\end{aligned}
\end{cases}
$$
we have
\begin{equation}\label{max estim nonlin}
\sup_{\Om} \, (-u) \leq C(n, \diam \Om, r,p)
\norm{g_+}_{L^p(\Omega)},
\end{equation}
where $g_+=\max (g,0)$ denotes the positive part of $g$.
\end{theorem}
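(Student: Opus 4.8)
The plan is to reduce the nonlinear inequality $G(z, D^2u) \le g$ to an inequality for the complex Monge-Amp\`ere operator and then invoke Ko\l odziej's theorem on the solvability/stability of the Dirichlet problem for $(\ddc \cdot)^n$. The starting point is the concavity-plus-homogeneity reformulation \ref{lin concave}: for a.e. $z$ and every $A, B \in \Gamma(z)$ we have $\sum_{i,j} G^{i\bar j}(z,A) B_{i\bar j} \ge G(z,B)$. Applying this with $A = D^2u(z)$ we obtain a \emph{linear} complex operator $L_z v = \sum_{i,j} a^{i\bar j}(z) v_{i\bar j}$, with $a^{i\bar j}(z) = G^{i\bar j}(z, D^2u(z))$ measurable in $z$ and (by ellipticity, Remark \ref{rem hyp comp with MA}) with positive semidefinite coefficient matrix $A(z) = (a^{i\bar j}(z))$. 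Moreover $\sum_{i,j} a^{i\bar j}(z) u_{i\bar j} = DG(D^2u)\cdot D^2u = G(z, D^2u) \le g$ by homogeneity, so $u$ is a strong subsolution of the linear operator $L_z$ driven by $g$. The key extra ingredient from assumption \ref{comparison}, in the form \ref{not UE}, is a quantitative lower bound on $\det A(z)$: differentiating $G(z, A + tP) \ge G(z,A) + t(\det P)^{1/n}$ at $t = 0^+$ gives $\sum_{i,j} G^{i\bar j}(z,A) P_{i\bar j} \ge (\det P)^{1/n}$ for all $P \in \coneb_n$; optimizing over $P \in \coneb_n$ (e.g. $P = (\det A)^{1/(n-1)} A^{-1}$ up to normalization, using the arithmetic-geometric type inequality $\Trace(A^{-1}P) \ge n (\det(A^{-1}P))^{1/n}$) yields $\det A(z) \ge c_n > 0$ for an explicit dimensional constant.

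With this in hand I would run the ABP-type argument in the spirit of Kuo--Trudinger but adapted to the complex setting following Ko\l odziej. Set $M = \sup_\Om(-u) = -u(z_0)$ for some $z_0 \in \Om$ (if $M \le 0$ there is nothing to prove). Consider the lower contact set of $u$ (equivalently, the image of the subgradient-type map associated with the convex envelope of $\min(u, 0)$ extended suitably, or directly a comparison with a solution of a Monge-Amp\`ere Dirichlet problem). The cleanest route: let $w$ solve $(\ddc w)^n = \mathbf{1}_{\{u \le 0\}} h \, \dV$ in a ball $B$ containing $\Om$ with $w = 0$ on $\partial B$, where $h$ is chosen so that, on the contact set, the pointwise inequality forces $-u \le -w$ by the comparison principle for plurisubharmonic functions. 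The relevant pointwise inequality comes from: on the lower contact set $D^2u \ge 0$ (so $D^2 u \in \coneb_n$ up to the boundary of the cone, where we use $\coneb_n \subset \Gamma(z)$), and there $c_n^{1/n} (\det D^2 u)^{1/n} \le (\det A)^{1/n}(\det D^2 u)^{1/n} \le \frac{1}{n}\Trace(A\, D^2u) = \frac{1}{n} \sum a^{i\bar j} u_{i\bar j} \le \frac1n g_+$ by the arithmetic-geometric inequality again; hence $(\ddc u)^n \le c_n^{-1} (g_+/n)^n \dV$ on the contact set in the pluripotential sense. Then Ko\l odziej's theorem (the $L^\infty$ estimate for the complex Monge-Amp\`ere equation) applied with the density in $L^{p/n}$ — which is where $p > n$ enters, so that $g_+^n \in L^{p/n}$ with $p/n > 1$ — gives $\sup_\Om(-u) \le C(n, \diam \Om, p) \|g_+^n\|_{L^{p/n}}^{1/n} = C \|g_+\|_{L^p}$.

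To make the contact-set reduction rigorous for merely $W^{2,r}_{\loc}$ functions (rather than $C^2$), I would pass through the theory of $L^p$-viscosity subsolutions alluded to in the abstract: one shows that a strong subsolution of the linear operator $L_z$ with the determinant lower bound is an $L^p$-viscosity subsolution of the complex Monge-Amp\`ere operator with right-hand side $c_n^{-1}(g_+/n)^n$, hence (by an approximation/mollification argument and the fact that such viscosity subsolutions are plurisubharmonic and satisfy $(\ddc u)^n \le c_n^{-1}(g_+/n)^n \dV$ as measures) a plurisubharmonic function to which the pluripotential comparison principle and Ko\l odziej's bound apply. The hypothesis $r > n$ guarantees $u \in C^0$ and that the relevant Sobolev/Aleksandrov-type second-order structure is available on the contact set, while $p > n$ is exactly the integrability threshold making Ko\l odziej's theorem applicable after raising to the $n$-th power.

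The main obstacle I anticipate is the regularity/approximation bookkeeping: transferring the pointwise almost-everywhere inequality $G(z, D^2u) \le g$ for a $W^{2,r}_{\loc}$ function into a statement about $(\ddc u)^n$ as a Monge-Amp\`ere measure and justifying the comparison principle in that class. One must be careful that $u$ need not be plurisubharmonic on all of $\Om$ (only $D^2u(z) \in \Gamma(z) \supset \coneb_n$ a.e., which does not force positivity), so the plurisubharmonicity is only recovered on the contact set where the convex/plurisubharmonic envelope is touched from below; handling the boundary of the cones $\Gamma(z)$ (where $G$ and its derivatives may degenerate) and the measurability-in-$z$ of the coefficients $a^{i\bar j}(z)$ without any continuity assumption on $z \mapsto G(z, \cdot)$ is the delicate technical heart of the argument. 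Everything else is a combination of the arithmetic-geometric mean inequality, the derivative bound from \ref{not UE}, and a black-box application of Ko\l odziej's $L^\infty$ estimate.
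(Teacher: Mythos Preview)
Your linearization and the determinant lower bound $\det(G^{i\bar j}(z,D^2u))\ge n^{-n}$ are correct and match the paper's ingredients. The gap is in the direction you run the comparison. You try to derive a Monge-Amp\`ere \emph{upper} bound $(\ddc u)^n\le C_n g_+^n\,\dV$ on a contact set and then invoke the pluripotential comparison principle, but $u$ is not plurisubharmonic (only $D^2u\in\Gamma(z)$, which may strictly contain $\coneb_n$), so $(\ddc u)^n$ is not even defined as a positive measure and Bedford--Taylor comparison does not apply to $u$. There is no complex analogue of the convex envelope with a controllable Monge-Amp\`ere measure that would rescue this cleanly; a real-ABP contact-set argument does go through but loses a factor and yields only $L^{2p}$ control, as the introduction notes. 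Your third paragraph also has the implication backwards: you claim a strong supersolution of $L_u$ becomes a viscosity subsolution of Monge-Amp\`ere, but what is actually needed (and provable) is the \emph{opposite} direction.

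The paper reverses the roles. It first constructs the plurisubharmonic barrier $\rho$ via Ko\l odziej, solving $(\ddc\rho)^n=(g_+)^n\,\dV$ in a containing ball with zero boundary data, so that $\sup(-\rho)\le C\norm{g_+}_{L^p}$ (here $p>n$ makes $(g_+)^n\in L^{p/n}$ with exponent $>1$). The key lemma then shows that this pluripotential subsolution $\rho$ is a $W^{2,r}/L^p$-viscosity subsolution of the \emph{linearized} equation $L_u\rho\ge g_+$: if a test $\varphi\in W^{2,r}_{\loc}$ satisfied $L_u\varphi\le g_+-\varepsilon$ and $\rho-\varphi$ had a strict local maximum, the viscosity property of $\rho$ for $F_{\MA}$ would force $\det D^2\varphi>(g_+)^n-\eta$ on a set of positive measure, and there $L_u\varphi\ge G(z,D^2\varphi)\ge(\det D^2\varphi)^{1/n}>g_+-\eta^{1/n}$ by \ref{lin concave} and \ref{comparison}, a contradiction. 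Since $L_u(-u)=-G(z,D^2u)\ge-g_+$ by homogeneity, $\rho-u$ is a viscosity subsolution of $L_u(\rho-u)\ge 0$, and a maximum principle for the degenerate linear operator $L_u$ (needing only $\sum_i a^{i\bar i}\ge G(z,\Id)\ge 1$) gives $\rho\le u$ on $\Omega$. The plurisubharmonic structure lives entirely on the barrier $\rho$; the comparison with $u$ takes place at the level of the linear operator $L_u$, where $u$ is a legitimate strong supersolution and no PSH hypothesis on $u$ is required.
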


Note that this result can be in particular applied to linear equations. Then (for suitable $G$) it can be seen as a complex counterpart of Theorem \ref{KuoTr}.
 
For nonlinear equations with operators which are positively homogeneous of degree different from $1$, we have the following immediate consequence:

\begin{corollary}\label{main thm k}
Let $(\Gamma(z))_{z \in \Omega} \subset \Herm^n$ be a family of open convex cones satisfying \eqref{cones range}.
Let $F:\Sigma \rightarrow \R$ be a nonnegative function such that, for some $k,\delta>0$, $G=(F^{1/k})/\delta$ satisfies \ref{reg F}, \ref{F hom}, \ref{concavity} and \ref{comparison}.
Let
\begin{equation}\label{cond rpk}
r>n, \quad p\geq 1, \quad p>\frac{n}{k}.
\end{equation}
Let $f \in L^p(\Omega)$ with $f \geq 0$ in $\Omega$.
Then, for any function $u \in W^{2,r}_{\loc}(\Om)\cap C^0(\clos{\Om})$ with $(D^2 u)(z) \in \Gamma(z)$ for a.e. $z \in \Omega$ and that satisfies
$$
\begin{cases}
\begin{aligned}
F(z,\DD u) & \leq f && \mbox{ in } \Omega, \\
u & \geq 0 && \mbox{ on } \partial\Omega,
\end{aligned}
\end{cases}
$$
we have
$$
\sup_{\Om} \, (-u) \leq C(n, \diam \Om, r, p, k, \delta)
\norm{f}_{L^p(\Omega)}^{\frac{1}{k}}.
$$
\end{corollary}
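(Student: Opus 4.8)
The plan is to deduce Corollary \ref{main thm k} directly from Theorem \ref{main thm}, applied to the operator $G=(F^{1/k})/\delta$, after rewriting the differential inequality in the right form and matching the integrability exponents by an elementary exponent bookkeeping.

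First I would record that, since $(D^2u)(z)\in\Gamma(z)$ for a.e.\ $z\in\Omega$, the value $F(z,(D^2u)(z))$ is well defined and nonnegative a.e.; using that $t\mapsto t^{1/k}$ is nondecreasing on $[0,\infty)$ and that $\delta>0$, the hypothesis $F(z,D^2u)\leq f$ (with $f\geq0$) yields
\[
G(z,D^2u)=\frac{1}{\delta}\bigl(F(z,D^2u)\bigr)^{1/k}\leq\frac{1}{\delta}f^{1/k}=:g\qquad\text{a.e. in }\Omega.
\]
By hypothesis $G$ satisfies \ref{reg F}, \ref{F hom}, \ref{concavity} and \ref{comparison}, and $u\in W^{2,r}_{\loc}(\Om)\cap C^0(\clos\Om)$ with $(D^2u)(z)\in\Gamma(z)$ a.e.\ solves $G(z,D^2u)\leq g$ in $\Omega$ together with $u\geq0$ on $\partial\Omega$, so $u$ is exactly of the type covered by Theorem \ref{main thm}, provided $g$ has the required integrability.

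Next I would set $\tilde p=kp$. The hypothesis $p>n/k$ gives $\tilde p>n$, so the pair $(r,\tilde p)$ satisfies \eqref{cond rp}; moreover $g=f^{1/k}/\delta\in L^{\tilde p}(\Omega)$ because $\int_\Omega|g|^{\tilde p}=\delta^{-\tilde p}\int_\Omega f^{p}<\infty$, and $g\geq0$ (as $f\geq0$), hence $g_+=g$ with
\[
\norm{g_+}_{L^{\tilde p}(\Omega)}=\frac{1}{\delta}\Bigl(\int_\Omega f^{p}\Bigr)^{1/(kp)}=\frac{1}{\delta}\norm{f}_{L^p(\Omega)}^{1/k}.
\]
Applying Theorem \ref{main thm} with the exponents $r$ and $\tilde p$ then gives $\sup_\Omega(-u)\leq C(n,\diam\Om,r,\tilde p)\norm{g_+}_{L^{\tilde p}(\Omega)}$, and substituting the identity above yields the claimed estimate, the constant $\tfrac1\delta C(n,\diam\Om,r,kp)$ depending only on $n,\diam\Om,r,p,k,\delta$.

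I do not expect a genuine obstacle here, since all the analytic content already sits in Theorem \ref{main thm}; the only points requiring a little care are the monotonicity step passing from $F(z,D^2u)\leq f$ to $G(z,D^2u)\leq g$, which uses $F\geq0$ and the almost-everywhere membership $(D^2u)(z)\in\Gamma(z)$, and the exponent bookkeeping, where it is precisely the substitution $\tilde p=kp$ that converts the weaker integrability hypothesis $p>n/k$ of the corollary into the condition $\tilde p>n$ needed to invoke Theorem \ref{main thm}.
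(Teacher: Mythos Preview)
Your proposal is correct and is precisely the intended derivation: the paper presents the corollary as an ``immediate consequence'' of Theorem \ref{main thm} and gives no separate proof, and your reduction---taking $k$-th roots to pass from $F$ to $G$, setting $\tilde p=kp$ so that $p>n/k$ becomes $\tilde p>n$, and reading off $\norm{g_+}_{L^{\tilde p}}=\delta^{-1}\norm{f}_{L^p}^{1/k}$---is exactly that consequence made explicit.
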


This establishes Ko\l odziej type uniform bounds (\cite{Kol98}) for a large class of nonlinear equations.
Note however that Corollary \ref{main thm k} is a uniform estimate for strong solutions, whereas in the case of the Monge-Ampère equation, the result from \cite{Kol98} is valid for more general solutions.
We also recall that for the Monge-Ampère equation the condition \eqref{cond rpk} for $p$, which becomes $p>1$, is optimal.

\begin{remark}
All our arguments apply verbatim also to $L^p$-viscosity solutions once a comparison principle is established\footnote{The comparison principle states that a subsolution is majorized by a supersolution once this holds on the boundary.}.
It is worth emphasizing that such a comparison principle is lacking even for general uniformly elliptic equations, see e.g. \cite{CCKS96}.
We refer to \cite{JS05} for the up-to-date partial results on the comparison principle for uniformly elliptic equations and to \cite{ABT07} for analogous discussion in the special case of the real Monge-Amp\`ere equation.
\end{remark}

\begin{remark}
We wish to point out that the methods from \cite{KT07} are also applicable in the complex setting but yield estimates dependent on $\norm{f}_{L^{2p}(\Om)}$.
The improvement in the exponent should be compared with the result in \cite{Esc93} who improved thel regularity assumptions in the setting of uniformly elliptic equations.
\end{remark}

\begin{remark}
In case we only know that $(D^2 u)(z) \in \clos{\Gamma(z)}$ for a.e. $z \in \Omega$, Theorem \ref{main thm} and its proof remain unchanged provided that there exists a set $N \subset \R$ such that, for a.e. $z \in \Omega$, $G(z,\cdot)$ is $C^1$ in a neighborhood of $\ens{A \in \clos{\Gamma(z)}, \quad G(z,A) \not\in N}$ and $G(z,(D^2 u)(z)) \not\in N$.
\end{remark}

Our proof is based on $L^p$-viscosity techniques suitably coupled with basic results from pluripotential theory.
Roughly speaking we produce a pluripotential subsolution to our equation and then show that it is also an $L^p$-viscosity barrier.
This coupled with a maximum principle for $L^p$-viscosity subsolutions yields the claim.
Our findings in fact show that uniform estimates for $L^p$-viscosity solutions to a large class of Hessian type equations can be deduced through pluripotential theoretic tools even if a pluripotential theory cannot be developed for
a particular equation (see \cite{Din20-pre} for a discussion of such a phenomenon).

The rest of this paper is organized as follows.
In Section \ref{sect ex Hess} we give some examples of Hessian equations that are covered by Theorem \ref{main thm} or Corollary \ref{main thm k}.
In Section \ref{sect visc basic} we introduce the notion of $W^{2,r}/L^p$-viscosity subsolutions for general elliptic equations.
In particular, Section \ref{sect examples} is devoted to examples explaining the differences from standard $L^p$-viscosity theory in the absence of uniform ellipticity.
In Section \ref{sect pluri} we show that pluripotential subsolutions to the complex Monge-Amp\`ere equation with $L^p$ right-hand side are also $W^{2,r}/L^p$-viscosity subsolutions when $r>n$.
In Section \ref{sect max PP} we prove a basic $W^{2,r}/L^p$-viscosity maximum principle.
The final Section \ref{sect proof} is devoted to the proof of our main result Theorem \ref{main thm}.

\subsection{Some examples for Hessian equations}\label{sect ex Hess}

In this work, our operators $F$ are not necessarily {\it Hessian} but all our examples below will be.
For this reason, let us first recall the notion of {\it Hessian type operators} on $\Om$.

In this section, $\beta$ will be a fixed smooth positive Hermitian $(1,1)$-form.
We recall that, given a smooth real $(1,1)$-form $\alpha$ on $\Om$ (not necessarily positive) the {\it eigenvalues} of $\alpha$ with respect to $\beta$ at a point $z$ are the solutions $\lambda$ to the equation
$$(\alpha(z)-\lambda\beta(z))^n=0.$$
These eigenvalues are real and they will be denoted by $\lambda_1(z,\alpha) \leq \ldots \leq \lambda_n(z,\alpha)$ and arranged in
$$\lambda(z,\alpha)=\left(\lambda_1(z,\alpha), \cdots, \lambda_n(z,\alpha)\right).$$

We recall that the eigenvalues are continuous functions of $z$ which are furthermore smooth off the branching locus.
Equivalently, writing $\beta=i\sum_{j,k=1}^n B_{j\bar{k}}dz_j\wedge d\bar{z}_k$ and $\alpha=i\sum_{j,k=1}^n A_{j\bar{k}}dz_j\wedge d\bar{z}_k$, where $A=(A_{j\bar{k}})_{1 \leq j,k \leq n}$ and $B=(B_{j\bar{k}})_{1 \leq j,k \leq n}$ are Hermitian matrices ($d$ is the exterior derivative), the eigenvalues are solutions to $\det(A(z)-\lambda B(z))=0$.

\begin{remark}
The choice $\beta=i\sum_{j=1}^n dz_j\wedge d\bar{z}_j$ (equivalently $B(z)=\Id$) yields the standard eigenvalues of a real $(1,1)$-form.
In applications however, especially when studying locally equations on complex manifolds, it is more natural to work with a $z$-dependent background form (see e.g. \cite[p. 230]{DK14}).
\end{remark}

If $M \in\Herm^n$, we simply denote by $\la_j(z,M)$ the eigenvalues of the corresponding form $\alpha=i\sum_{j,k=1}^n M_{j\bar{k}}dz_j\wedge d\bar{z}_k$ with respect to $\beta$ at the point $z$.
When $B$ does not depend on $z$, we shall simply write $\la_j(M)$ and $\lambda(M)$.

Let us now give a precise statement of what we call an Hessian operator:

\begin{definition}\label{Hessianoperator}
Assume that the family $(\Gamma(z))_{z \in \Omega} \subset \Herm^n$ of open convex cones satisfies \eqref{cones range} and
\begin{equation}\label{cone prop 3}
\forall z \in \Om, \, \forall A, \tilde{A} \in \Herm^n,
\quad
\left(A\in \Gamma(z) \text{ and } \lambda(z,\tilde{A})=\lambda(z,A)\right)
\Longrightarrow \tilde{A}\in \Gamma(z).
\end{equation}
Then, a function $F:\Sigma \longrightarrow \R$ is said to be a Hessian operator if there is exists a function $\hat{F}:\hat{\Sigma} \longrightarrow \R$, where $\hat{\Sigma}=\ens{(z,\lambda(z,A)) \quad \middle| \quad (z,A) \in \Sigma}$, so that, for every $(z,A) \in \Sigma$, we have
$$F(z,A)=\hat{F}(z,\la(z,A)).$$
When $\hat{F}$ does not depend on $z$, we shall simply write $\hat{F}(\lambda(z,A))$.
\end{definition}

In the case $B(z)=\Id$, the property \eqref{cone prop 3} can be equivalently rephrased as the $\mathcal{O}(n)$-invariance ($\mathcal{O}(n)$ denotes the orthogonal group): we have $A\in\Gamma(z)$ if, and only if, 
\begin{equation}\label{orth inv}
O^*AO \in \Gamma(z), \quad \forall O\in\mathcal{O}(n),
\end{equation}
where $O^*$ denotes the Hermitian transposed matrix of $O$.
This property (\ref{orth inv}) appears for instance in \cite[p. 778]{HL18}, where it is called ``ST-Invariance'' (it stands for Spherical Transitivity).

Before finally presenting some examples, we recall the definition of the basic cones associated to Hessian equations.
For $1 \leq m \leq n$, the cones $\Gamma_m \subset \R^n$ are defined as follows
$$
\Gamma_m=\ens{(\lambda_1,\ldots,\lambda_n) \in \R^n \quad \middle| \quad \sigma_q(\lambda_1,\ldots,\lambda_n)>0, \quad \forall  q \in \ens{1,\cdots,m}},
$$
where
$$
\sigma_q(\lambda_1,\ldots,\lambda_n)=\sum_{1\leq i_1<\cdots<i_q\leq n}\la_{i_1}\cdots\la_{i_q}.
$$

The cones $\lambda^{-1}(\Gamma_k)$ clearly satisfy the $\mathcal{O}(n)$-invariance \eqref{orth inv} and it can be shown that they are convex (see e.g. \cite[Section 2]{Blo05}).

\begin{example}\label{ex cones}
Examples of equations covered by our framework of Section \ref{sect hyp} and that are Hessian equations include:
\begin{enumerate}[1)]
\item
The complex Monge-Ampère equation:
$$
\Gamma(z)=\lambda^{-1}(\Gamma_n),
\quad
\hat{F}(\lambda_1,\ldots,\lambda_n)=\prod_{i=1}^n \lambda_i.
$$
Here, the degree of homogeneity of $F$ is $k=n$, the concavity is well known and the comparison \ref{comparison} is trivial.

\item
The complex $m$-Hessian operator: for $1 \leq m \leq n$,
$$
\Gamma(z)=\lambda^{-1}(\Gamma_m),
\quad
\hat{F}=\sigma_m.
$$
Here, the degree is $k=m$, the concavity follows from G{\aa}rding's inequality, and the comparison \ref{comparison} follows from Maclaurin's inequality (see e.g. \cite[Section 2]{Blo05}).

\item
The complex $m$-Monge-Ampère operator (c.f. \cite{HL18,Din20-pre,Din20-pre-2}): for $1 \leq m \leq n$,
\begin{gather*}
\Gamma(z)=\lambda^{-1}\left(\ens{(\lambda_1,\ldots,\lambda_n) \in \R^n \quad \middle| \quad \la_{i_1}+\cdots+\la_{i_m}>0, \quad 
\forall 1\leq i_1<\cdots<i_m\leq n}\right),
\\
\hat{F}(\lambda_1,\ldots,\lambda_n)=\prod_{1\leq i_1<\cdots<i_m\leq n}(\la_{i_1}+\cdots+\la_{i_m}).
\end{gather*}
Here, the degree is $k=\binom nm$, the operator is concave and the comparison \ref{comparison} holds (see e.g. \cite[Section 1.6]{Din20-pre-2} and \cite{AO20}).

\item
$B(z)=\Id$ and, for $a\in [0,1]$,
\begin{gather*}
\Gamma(z)=\lambda^{-1}(\Gamma_{2-a}),
\quad
\Gamma_{2-a}=\ens{(\lambda_1,\lambda_2) \in \R^2 \quad \middle| \quad \la_1+a\la_2>0, \quad \la_2+a\la_1>0},
\\
\hat{F}(\lambda_1,\lambda_2)=(1-a)^2\lambda_1 \lambda_2+a(\lambda_1+\lambda_2)^2,
\end{gather*}
(the cones $\Gamma_{2-a}$ interpolate between $\Gamma_1$ and $\Gamma_2$).
Here, the degree is $k=2$, the concavity and the comparison \ref{comparison} are easily checked.

\item
More generally, for any polynomial $P$ hyperbolic with respect to $v\in\R^n$ (cf. \cite{Gar59, CNS85, HL18}) of degree $k$, the operator $\hat{F}(\lambda_1,\ldots,\lambda_n)=P(\lambda_1,\ldots,\lambda_n)$ defined on the component of $P \neq 0$ in $\R^n$ containing $v$, satisfies \ref{concavity}.
Whether it satisfies the comparison \ref{comparison} or not may depend on $P$.

\item
An example of an operator meeting all the conditions above {\bf except} \ref{comparison} is the Hessian quotient operator given by
$$
\Gamma(z)=\lambda^{-1}(\Gamma_m),
\quad
\hat{F}(\lambda_1,\ldots,\lambda_n)=\frac{\sigma_m(\lambda_1,\ldots,\lambda_n)}{\sigma_\ell(\lambda_1,\ldots,\lambda_n)},
$$
for $1\leq \ell<m\leq n$.
Here, the degree is $k=m-\ell$ and the operator is concave.
\end{enumerate}
\end{example}

\section{$W^{2,r}/L^p$-viscosity subsolutions}\label{sect visc basic}

As our analysis will be based on the pluripotential theory for the complex Monge-Amp\`ere equation, we need to recall that plurisubharmonic solutions to this equation, even for regular right-hand side data, need not possess sufficient Sobolev regularity (see \cite{BT82,Blo99,Kol05,DD20} and Section \ref{sect examples} below).
Hence they are {\it not} strong solutions in general and as such cannot be tested directly for other types of operators.
On the other hand, as discussed in \cite{Din20-pre}, a general Hessian operator, even one satisfying the hypotheses above, may fail to have properties necessary to develop its own pluripotential theory.
Hence in order to accommodate pluripotential (sub)solutions  it is necessary to develop another theory of weak solutions.
As it turns out a good choice is the $L^p$-viscosity theory which we shall briefly sketch below.
We refer for instance to \cite{CCKS96} for more background.

In all this section, we only need to assume that $A \mapsto F(z,A)$ is elliptic in $\Gamma(z)$ for a.e. $z \in \Omega$.

\subsection{Definition and remarks}

Below we introduce the notion of a $W^{2,r}/L^p$-viscosity subsolution associated to an operator $F$.
It is inspired from \cite[Definition 2.1]{CCKS96} but also contains notable differences, see Remark \ref{rem def visc} below.

First of all, when dealing with constrained elliptic equations (i.e. $\Gamma(z) \neq \Herm^n$) it is a standard and useful procedure in viscosity theory (see e.g. \cite{CIL92,ABT07}) to extend the operator $F$ by $-\infty$ outside the cone i.e.
\begin{equation}\label{offcone}
F(z,A)=-\infty, \quad \forall z \in \Omega, \, \forall A \in \Herm^n \backslash \Gamma(z).
\end{equation}

\begin{definition}\label{def visc sol}
Let $F:\Sigma \longrightarrow \R$ satisfy \eqref{offcone}, $r,p \geq 1$ and $f \in L^p(\Omega)$.
An upper semicontinuous function $u:\Om \longrightarrow \R$ is said to be a $W^{2,r}/L^p$-viscosity solution of
$$F(z, \DD u)\geq f \quad \mbox{ in } \Omega,$$
if, for every lower semicontinuous function $\varphi:\Om \longrightarrow \R$ with $\varphi \in W^{2,r}_{\loc}(\Om)$, for every $\varepsilon>0$ and nonempty open subset $U\subset\Om$, if
$$F(z,(D^2\varphi)(z))\leq f(z)-\varepsilon \quad \mbox{ a.e. } z \in U,$$
then $u-\varphi$ cannot have a {\bf strict} local maximum in $U$.
\end{definition}

We will also say that $u$ is a $W^{2,r}/L^p$-viscosity subsolution.

\begin{remark}\label{rem def visc}
Let us comment this new definition:
\begin{itemize}
\item
The functions $\varphi$ in Definition \ref{def visc sol} are called test functions.
If the class of $W^{2,r}_{\loc}$ testing functions above is exchanged by the class of $C^2$ tests we get the notion of $C$-viscosity (sub)solutions.
These are much better studied and we refer to \cite{CIL92,CC95} for the general theory of $C$-viscosity for uniformly elliptic equations.
For complex equations, $C$-viscosity has been considered first in \cite{EGZ11} and then in \cite{Wan12,Zer13} for the complex Monge-Ampère equation and in \cite{DDT19} for degenerate Hessian equations.

\item
The main difference with the definition introduced in \cite[Definition 2.1]{CCKS96} is that our operators are not uniformly elliptic and this is why we require in addition the condition that the maximum is strict.
We refer to Section \ref{sect examples} for some instructive examples.

\item
In Theorem \ref{main thm} we assume that $r>n$, for which we have the
Sobolev embedding $W^{2,r}_{\loc}(\Om) \subset C^0(\Om)$.
However, there is no need to require such a condition in Definition \ref{def visc sol} since every quantity makes sense as it is introduced (compare with \cite[Definition 2.1]{CCKS96}).

\item If $F$ is a linear operator of  type $F(z,D^2u)=\sum_{i,j=1}^n a^{i\bar{j}}u_{i\bar{j}}$ with $(a^{i\bar{j}})_{1 \leq i,j \leq n} \geq 0$, contrary to the uniformly elliptic case (see \cite{CCKS96}) we do {\bf not} assume that the matrix entries are essentially bounded, and hence $F$ is in general {\bf not} expected to send $W^{2,p}$ functions $u$ to $L^p$.
\end{itemize}
\end{remark}

Finally, the following observation will be used in the proof of the main theorem.

\begin{remark}\label{sum visc sol}
If $A \mapsto G(z,A)$ is linear and if $u_1$ is a $W^{2,r}/L^p$-viscosity solution to $G(z,D^2 u_1) \geq g_1$ in $\Omega$ and $u_2$ is a $W^{2,r}/L^p$-viscosity solution to $G(z,D^2 u_2) \geq g_2$ in $\Omega$  with $u_2 \in W_{\loc}^{2,r}(\Omega)$, then $u=u_1+u_2$ is a $W^{2,r}/L^p$-viscosity solution to $G(z,D^2 u) \geq g_1+g_2$ in $\Omega$.
This follows from the definition as we can subtract $u_2$ from any testing function $\varphi\in W^{2,r}_{\loc}(\Om)$ and the resulting function is testing for $u_1$.
\end{remark}

\subsection{The strict maximum condition}\label{sect examples}

In this section, we explain why we have required that the maximum is strict in Definition \ref{def visc sol}.
We discuss this for the complex Monge-Ampère equation and its linearization.

Recall once again that equations of Monge-Amp\`ere type locally admit {\it singular} solutions no matter how smooth the right-hand side is.
The following example from \cite{Blo99} is modeled on real Pogorelov type singular convex functions:

\begin{example}\label{ex Blocki}
Let $n \geq 2$.
Let $u$ be the function defined for every $z=(z_1,z')\in\C^n$ by
\begin{equation}\label{def u ex}
u(z)= \norm{z'}^{2\left(1-\frac{1}{n}\right)}\left(1+|z_1|^2\right).
\end{equation}
It is smooth off $\ens{z'=0}$ and $u \in W^{2,r}_{\loc}(\C^n)$ for any $1 \leq r<n(n-1)$ with $(D^2 u)(z) \in \coneb_n$ for every $z \in \C^n \backslash \ens{z'=0}$.
A computation shows that it is a strong solution to
\begin{equation}\label{sol u ex}
\det(D^2 u)=f \quad \text{ in } \C^n, \quad f(z)=\left(1-\frac{1}{n}\right)^n(1+|z_1|^2)^{n-2}.
\end{equation}
\end{example}

It is also easily seen that $u$ is a $C$-viscosity solution, the only problem is at $\ens{z'=0}$ and there are clearly no $C^2$-smooth differential tests from above, while $C^2$ differential tests from below have to vanish along $\ens{z'=0}$ and thus have vanishing Monge-Amp\`ere operator \cite{Zer13}.

When it comes to $W^{2,r}/L^p$-viscosity things are substantially subtler.
We have built the following example:

\begin{example}\label{ex strict needed}
Let $n\geq 3$ and $R>0$ be fixed.
Let $u$ and $f$ be the functions of Example \ref{ex Blocki}.
Consider the function $\varphi$ defined for every $z=(z_1,z')\in\C^n$ by
\begin{equation}\label{def varphi ex}
\varphi(z)=\norm{z'}^{2\left(1-\frac{1}{n}\right)}(1+R^2-\norm{z'}^2).
\end{equation}
We have $\varphi \in W^{2,r}_{\loc}(B_R(0))$ for every $n<r<n(n-1)$ and $(D^2 \varphi)(z) \in \coneb_n$ for every $z \in B_R(0) \backslash \ens{z'=0}$.
As $\varphi$ is a function of $n-1$ variables, it is a strong solution to the equation
$$\det(D^2\varphi)=0 \quad \text{ in } B_R(0).$$
Besides, we can always find $\epsilon>0$ small enough so that $0<f-\varepsilon$.
On the other hand, we clearly have
$$u-\varphi \leq 0=(u-\varphi)_{| \ens{z'=0}}, \quad \text{ in } B_R(0),$$
so that $u-\varphi$ has no strict maximum in $B_R(0)$.

In conclusion, if we do not require the strict maximum condition in Definition \ref{def visc sol}, then the function $u$ is a strong solution to $\det(D^2 u)=f$ but it would not be a $W^{2,r}/L^p$-viscosity solution to $\det(D^2 u) \geq f$, whatever $p \geq 1$ is (even if $r>n$).
Note that this has nothing to do with the regularity of the right-hand side since $f \in C^{\infty}(\C^n)$.
\end{example}

The previous example shows that imposing {\it strict} local maxima in the definition of $W^{2,r}/L^p$-viscosity solutions is essential for Hessian type equations.
It turns out that the same example works for the {\it linearized equation} provided $r$ is taken sufficiently small.

\begin{example}\label{linearizedexample}
Let $u,f,\varphi$ be the same functions as in Example \ref{ex strict needed} but consider this time the following linear operator:
$$L_u h=\sum_{i,j=1}^n a^{i\bar{j}}(z) h_{i\bar{j}},
\quad a^{i\bar{j}}(z)=\frac{\partial \det}{\partial a_{i\bar{j}}}((D^2 u)(z)).$$
By homogeneity, we immediately have $L_u u=n \det(D^2 u)=nf$.
On the other hand, for sufficiently small $R,\varepsilon>0$, we will show that
\begin{equation}\label{ineq Lu varphi}
L_u \varphi \leq n f-\varepsilon \quad \text{ in } B_R(0).
\end{equation}
Therefore, even for linear equations, the strict maximum condition that we introduced in  Definition \ref{def visc sol} is needed if we at least want that strong solutions are $W^{2,r}/L^p$-viscosity solutions.

We provide the details of the inequality \eqref{ineq Lu varphi} for $n=3$ only, the general case is analogous.
An explicit computation of the $a^{i\bar{j}}$ and the fact that $\varphi$ does not depend on $z_1$ yield
\begin{multline*}
 \sum_{i,j=1}^3 a^{i\bar{j}}(z)\varphi_{i\bar{j}}
=(u_{1\bar{1}}u_{3\bar{3}}-|u_{1\bar{3}}|^2)\varphi_{2\bar{2}}
+ (u_{1\bar{1}}u_{2\bar{2}}-|u_{1\bar{2}}|^2)\varphi_{3\bar{3}}
\\
+(u_{3\bar{1}}u_{1\bar{2}}-u_{1\bar{1}}u_{3\bar{2}})\varphi_{2\bar{3}}
+(u_{2\bar{1}}u_{1\bar{3}}-u_{1\bar{1}}u_{2\bar{3}})\varphi_{3\bar{2}}.
\end{multline*}
From the expression \eqref{def u ex} of $u$ we have, for $z' \neq 0$,
\begin{gather*}
u_{1\bar{1}}u_{3\bar{3}}-|u_{1\bar{3}}|^2
=\frac{2}{3}\left(1+\abs{z_1}^2\right)\norm{z'}^{\frac{2}{3}}
-\left(\frac{2}{9}+\frac{2}{3}\abs{z_1}^2\right)|z_3|^2\norm{z'}^{-\frac{4}{3}},
\\
u_{1\bar{1}}u_{2\bar{2}}-|u_{1\bar{2}}|^2
=\frac{2}{3}\left(1+\abs{z_1}^2\right)\norm{z'}^{\frac{2}{3}}
-\left(\frac{2}{9}+\frac{2}{3}\abs{z_1}^2\right)|z_2|^2\norm{z'}^{-\frac{4}{3}},
\\
u_{3\bar{1}}u_{1\bar{2}}-u_{1\bar{1}}u_{3\bar{2}}
=\left(\frac{2}{9}+\frac{2}{3} \abs{z_1}^2\right) \bar{z}_3 z_2 \norm{z'}^{-\frac{4}{3}},
\\
u_{2\bar{1}}u_{1\bar{3}}-u_{1\bar{1}}u_{2\bar{3}}
=\left(\frac{2}{9}+\frac{2}{3} \abs{z_1}^2\right) \bar{z}_2 z_3 \norm{z'}^{-\frac{4}{3}}.
\end{gather*}
From the expression \eqref{def varphi ex} of $\varphi$ we have, for $z' \neq 0$,
\begin{gather*}
\varphi_{2\bar{2}}=
-\frac{5}{3}\norm{z'}^{\frac{4}{3}}
+\left(-\frac{10}{9}|z_2|^2 +\frac{2}{3}(1+R^2) \right)\norm{z'}^{-\frac{2}{3}}
-\frac{2}{9}(1+R^2) |z_2|^2\norm{z'}^{-\frac{8}{3}},
\\
\varphi_{3\bar{3}}=
-\frac{5}{3}\norm{z'}^{\frac{4}{3}}
+\left(-\frac{10}{9}|z_3|^2 +\frac{2}{3}(1+R^2) \right)\norm{z'}^{-\frac{2}{3}}
-\frac{2}{9}(1+R^2) |z_3|^2\norm{z'}^{-\frac{8}{3}},
\\
\varphi_{2\bar{3}}=
-\frac{10}{9}\bar{z}_2z_3\norm{z'}^{-\frac{2}{3}}
-\frac{2}{9}(1+R^2)\bar{z}_2 z_3\norm{z'}^{-\frac{8}{3}},
\\
\varphi_{3\bar{2}}=
-\frac{10}{9}\bar{z}_3z_2\norm{z'}^{-\frac{2}{3}}
-\frac{2}{9}(1+R^2)\bar{z}_3 z_2\norm{z'}^{-\frac{8}{3}}.
\end{gather*}
Therefore, for $z' \neq 0$,
\begin{align*}
L_u \varphi &=
-\left(\frac{70}{27}+\frac{50}{27}\abs{z_1}^2\right)\norm{z'}^2
+\frac{16}{27}(1+R^2)+\frac{8}{27}(1+R^2)|z_1|^2 \\
& \leq \frac{16}{27}(1+R^2)+\frac{8}{27}(1+R^2)|z_1|^2.
\end{align*}
As $3f=\frac{24}{27}+\frac{24}{27}|z_1|^2$ (recall \eqref{sol u ex}), we see that $L_u \varphi<3f$ for any $R<\frac 1{\sqrt{2}}$, so that \eqref{ineq Lu varphi} holds provided $\varepsilon$ is taken small enough (since $L_u \varphi$ and $f$ are continuous on the closure of $B_R(0)$).
\end{example}

\section{$W^{2,r}/L^p$-viscosity subsolutions and pluripotential theory}\label{sect pluri}

All along this section, we denote the open unit ball by $B_1=B_1(0)$.

Let $F_{\MA}(A)=\det A$ for $A \in \coneb_n$ and $F_{\MA}(A)=-\infty$ otherwise.
The goal of this section is to show the following result:

\begin{theorem}\label{thm rho}
Let
\begin{equation}\label{cond rq}
r>n, \quad q>1,
\end{equation}
and let $g \in L^q(B_1)$ with $g \geq 0$.
Then, there exists a $W^{2,r}/L^q$-viscosity solution $\rho \in C^0(\clos{B_1})$ of
$$F_{\MA}(\DD \rho) \geq g \quad \mbox{ in } B_1,$$
with $\rho=0$ on $\partial B_1$ and which satisfies
$$
\sup_{B_1} \, (-\rho)\leq C(n,q) \norm{g}_{L^q(B_1)}^{\frac{1}{n}}.
$$
\end{theorem}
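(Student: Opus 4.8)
\textbf{The plan} is to take for $\rho$ the plurisubharmonic solution of the Dirichlet problem for the complex Monge--Amp\`ere equation with right‑hand side $g$ on $B_1$, supplied by pluripotential theory together with Ko\l odziej's estimate, and then to verify that this pluripotential solution is a $W^{2,r}/L^q$-viscosity subsolution. I will repeatedly use that for a $C^2$ plurisubharmonic $w$ one has $(dd^c w)^n=c_n\det(D^2 w)\,dV$ for a fixed dimensional constant $c_n>0$, and that this identity persists — with $D^2 w$ understood pointwise a.e.\ — whenever $w$ is plurisubharmonic and $w\in W^{2,r}_\loc$ with $r>n$ (the $n$‑fold products of second derivatives are then locally integrable, which is where $r>n$ first enters). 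To construct $\rho$: if $\|g\|_{L^q(B_1)}=0$ take $\rho\equiv 0$; otherwise set $M:=\|g\|_{L^q(B_1)}>0$, and since $B_1$ is strictly pseudoconvex and $(c_n/M)g\in L^q(B_1)$ with $q>1$, Ko\l odziej's theorem \cite{Kol98} produces $v\in\PSH(B_1)\cap C^0(\clos{B_1})$ with $(dd^c v)^n=(c_n/M)\,g\,dV$ in $B_1$, $v=0$ on $\partial B_1$, and the a priori bound $\sup_{B_1}(-v)\le C(n,q)$ (the $L^q$‑norm of the right‑hand side being a dimensional constant). Put $\rho:=M^{1/n}v$. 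By the homogeneity of the Monge--Amp\`ere operator, $(dd^c\rho)^n=c_n\,g\,dV$, $\rho=0$ on $\partial B_1$, and $\sup_{B_1}(-\rho)=M^{1/n}\sup_{B_1}(-v)\le C(n,q)\,\|g\|_{L^q(B_1)}^{1/n}$, as required.

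Next I would prove that $\rho$ is a $W^{2,r}/L^q$-viscosity subsolution of $F_{\MA}(D^2\rho)\ge g$, by contradiction. Suppose $\varphi\in W^{2,r}_\loc(B_1)$ (hence $\varphi\in C^0(B_1)$, as $r>n$), $\varepsilon>0$ and a nonempty open $U\subset B_1$ are such that $F_{\MA}(D^2\varphi)\le g-\varepsilon$ a.e.\ in $U$ and $\rho-\varphi$ has a strict local maximum at some $z_0\in U$. Normalizing $\varphi$ by an additive constant we may assume $\rho(z_0)=\varphi(z_0)$; fix $R>0$ with $\clos{B_R(z_0)}\subset U$ and $\rho<\varphi$ on $\clos{B_R(z_0)}\setminus\{z_0\}$, and set $\mu:=-\max_{\partial B_R(z_0)}(\rho-\varphi)>0$. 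Assume first that $\varphi$ is plurisubharmonic on $B_R(z_0)$. For $0<\delta<\mu$ consider $W:=\{z\in B_R(z_0):\rho(z)>\varphi(z)-\delta\}$, a neighbourhood of $z_0$ with $\clos{W}\subset B_R(z_0)$ and $\rho=\varphi-\delta$ on $\partial W$. Applying the Bedford--Taylor comparison principle on $W$ to the bounded plurisubharmonic functions $\varphi-\delta\le\rho$, which coincide on $\partial W$, gives $\int_W(dd^c\rho)^n\le\int_W(dd^c\varphi)^n$, i.e.
\[
c_n\int_W g\,dV\;\le\;c_n\int_W\det(D^2\varphi)\,dV\;\le\;c_n\int_W(g-\varepsilon)_+\,dV\;\le\;c_n\int_W g\,dV,
\]
where we used $F_{\MA}(D^2\varphi)\le g-\varepsilon$ a.e.\ (so $\det(D^2\varphi)\le(g-\varepsilon)_+$ a.e., since $D^2\varphi\ge0$) and $g\ge0$. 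Hence $g=0$ a.e.\ on $W$, so $\rho$ and $\varphi-\delta$ are both maximal plurisubharmonic on $W$, continuous on $\clos{W}$ and equal on $\partial W$; by the comparison principle they coincide on $W$. Since $z_0\in W$ this forces $(\rho-\varphi)(z_0)=-\delta<0$, contradicting $(\rho-\varphi)(z_0)=0$.

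To remove the plurisubharmonicity assumption on $\varphi$, I would replace $\varphi$ by its plurisubharmonic envelope $\Phi:=\bigl(\sup\{w\in\PSH(B_R(z_0)):w\le\varphi\}\bigr)^*$, which is plurisubharmonic, continuous up to $\clos{B_R(z_0)}$, satisfies $\rho\le\Phi\le\varphi$ and $\Phi(z_0)=\rho(z_0)=\varphi(z_0)$. The key point is the mass estimate $(dd^c\Phi)^n\le c_n(g-\varepsilon)_+\,\mathbf{1}_{\{\Phi=\varphi\}}\,dV$: the measure $(dd^c\Phi)^n$ is carried by the contact set $\{\Phi=\varphi\}$ (as $\Phi$ is maximal on the open set $\{\Phi<\varphi\}$), and at a.e.\ point of this set one has $0\le D^2\Phi\le D^2\varphi$ (since $\varphi-\Phi\ge0$ attains a minimum there and $\Phi$ is plurisubharmonic), so $D^2\varphi\ge0$ there and $\det(D^2\Phi)\le\det(D^2\varphi)\le(g-\varepsilon)_+$. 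With this estimate, the argument of the previous paragraph — run with $\Phi$ in place of $\varphi$ and with the localizing domain chosen so that the relevant boundary values match $\rho$ — again produces $\rho\equiv\Phi-\delta$ near $z_0$, hence $(\rho-\varphi)(z_0)=(\rho-\Phi)(z_0)=-\delta<0$, a contradiction. This completes the proof.

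\textbf{The main obstacle} is the envelope step: proving that the Monge--Amp\`ere measure of the plurisubharmonic envelope of a $W^{2,r}_\loc$ function with $r>n$ is absolutely continuous and is dominated, on its contact set, by the a.e.-defined density $\det(D^2\varphi)$. This is exactly where the exponent condition $r>n$ is essential, and it must be combined with care about the directions along which $D^2\varphi$ is only positive semidefinite, about the region where $\varphi$ fails to be plurisubharmonic, and about matching boundary values when localizing so as to invoke uniqueness for the homogeneous Monge--Amp\`ere Dirichlet problem. It is precisely this kind of degenerate and non‑plurisubharmonic behaviour that makes the strict‑maximum requirement in the definition of $W^{2,r}/L^q$-viscosity subsolutions indispensable here; compare the examples in Section~\ref{sect examples}.
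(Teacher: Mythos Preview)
Your construction of $\rho$ via Ko\l odziej's theorem, and the overall plan of showing that a pluripotential solution is a $W^{2,r}/L^q$-viscosity subsolution, coincide with the paper's. The divergence is in how you carry out that last step.

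The paper proves a comparison theorem (Theorem~\ref{thm RT}) between a bounded PSH function $u$ and an arbitrary $v\in W^{2,r}(B)$, $r>n$, by \emph{approximation and stability}: one approximates $v$ by smooth $v_j$, solves auxiliary Dirichlet problems $(dd^c w_j)^n=g_j\,dV$ with $w_j=v_j$ on $\partial B$, invokes Sibony's $C^2$ comparison (Theorem~\ref{sibony}) to get $w_j\le v_j$, and then passes to the limit using Ko\l odziej's $L^{r/n}$-stability of the Dirichlet problem to obtain $w\le v$. This reduces the $W^{2,r}$ case entirely to the $C^2$ case and uses no envelope regularity whatsoever. Proposition~\ref{plpvslpvisc} (pluripotential $\Rightarrow$ $W^{2,r}/L^q$-viscosity) then follows in one line from Theorem~\ref{thm RT}.

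Your argument in the case where the test function $\varphi$ is already plurisubharmonic on a small ball is correct and pleasantly direct: Bedford--Taylor's inequality $\int_{\{\varphi-\delta<\rho\}}(dd^c\rho)^n\le\int_{\{\varphi-\delta<\rho\}}(dd^c\varphi)^n$ together with $(dd^c\varphi)^n=c_n\det(D^2\varphi)\,dV$ (valid for PSH $\varphi\in W^{2,r}$, $r>n$, by convolution) forces $g=0$ and then uniqueness for the homogeneous problem gives the contradiction.

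The genuine gap is exactly the one you flag. For $\varphi\in W^{2,r}_\loc$ that is \emph{not} plurisubharmonic, your envelope $\Phi$ is only a bounded PSH function; it need not be twice differentiable a.e., so the pointwise inequality ``$0\le D^2\Phi\le D^2\varphi$ on the contact set'' is not available, and even if a.e.\ second derivatives existed one would only get $(dd^c\Phi)^n\ge c_n\det(D^2\Phi)\,dV$, not equality. The known results giving $(dd^c\Phi)^n=\mathbf 1_{\{\Phi=\varphi\}}(dd^c\varphi)^n$ require a $C^{1,1}$ (or $C^2$) obstacle, which is strictly more than $W^{2,r}$, $r>n$. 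Bridging this would in practice force you back to an approximation argument for $\varphi$ --- at which point the paper's route via Ko\l odziej stability is both shorter and avoids the obstacle problem altogether.
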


\subsection{Background on pluripotential theory}

First of all, let us discuss some  pluripotential tools.
One of the main problems which pluripotential theory  handles is the solvability complex Monge-Amp\`ere  equation. 
For every function $\rho \in \PSH(B_1)\cap C^2(B_1)$, the complex Monge-Amp\`ere operator is given as follows:
$$
(\ddc \rho)^n=
4^n n! \det\left(D^2 \rho\right) \dV.
$$
Here $\dV$ denotes the Lebesgue measure in  $\C^n$, $d=\partial +\bar{\partial}$, $d^c=i(\bar{\partial}-\partial)$, $\PSH(B_1)$ is the set of plurisubharmonic functions in $B_1$ i.e. the set of upper semicontinuous, locally integrable functions $\rho$ such that $\ddc \rho\geq 0$.
The operator $(\ddc \rho)^n$ is well defined on bounded plurisubharmonic functions, as follows from the work \cite{BT76} (see also \cite{BT82}).

Below we consider the Dirichlet problem associated to the Monge–Amp\`ere operator in the ball $\B$.
It reads:
\begin{equation}\label{MAdir}
\begin{cases}
(\ddc \rho)^n=g \dV & \text{in } \B,\\
\rho=\psi & \text{on } \partial\B,
\end{cases} 
\end{equation}
where $g \in L^q(\B)$ ($q>1$) and $\psi\in C^0(\partial\B)$.
We recall now the following fundamental result  from \cite{Kol98}:

\begin{theorem}\label{Kolodziej}
Let $g \in  L^q(\B)$ ($q>1$) with $g \geq 0$.
There exists a unique solution $\rho \in \PSH(\B) \cap C^0(\clos{\B})$ of \eqref{MAdir} with $\psi=0$, and it satisfies
$$\sup_{\B} \, (-\rho)\leq C(n,q) \norm{g}^{\frac{1}{n}}_{L^q(\B)}.$$
\end{theorem}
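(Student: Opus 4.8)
This is Ko\l odziej's theorem \cite{Kol98}, and the plan is to follow its pluripotential-theoretic proof. Uniqueness I would get directly from the Bedford--Taylor comparison principle for bounded plurisubharmonic functions \cite{BT82}: if $\rho_1,\rho_2\in\PSH(\B)\cap L^\infty(\B)$ both solve \eqref{MAdir} with $\psi=0$, then $(\ddc\rho_1)^n=(\ddc\rho_2)^n$ together with $\rho_1=\rho_2$ on $\partial\B$ forces $\rho_1\le\rho_2$ and $\rho_2\le\rho_1$ in $\B$. For existence I would first solve the equation for continuous right-hand sides, where classical theory applies, and then pass to the limit with the help of the uniform estimate; so the real work is the a priori bound $\sup_{\B}(-\rho)\le C(n,q)\norm{g}_{L^q(\B)}^{1/n}$.

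\textbf{The a priori estimate, which is the heart of the matter.} Assume first $g$ continuous, so a bounded solution $\rho\le 0$ exists. Set $\mu=g\,\dV$ and, for $s>0$, $U(s)=\ens{z\in\B : \rho(z)<-s}$ (an open set, $\rho$ being upper semicontinuous), and study the nonincreasing function $a(s)=\mathrm{cap}(U(s),\B)$, where $\mathrm{cap}(E,\B)=\sup\ens{\textstyle\int_E(\ddc v)^n : v\in\PSH(\B),\ -1\le v\le 0}$ is the Bedford--Taylor relative capacity. Two estimates drive the whole argument. First, the comparison inequality $(t-s)^n a(t)\le\mu(U(s))$ for $0\le s<t$, which I would prove by feeding the test function $\tilde v=(t-s)v-s$ (so that $U(t)\subset\ens{\tilde v>\rho}\subset U(s)$) into the comparison principle for $\tilde v$ and $\rho$. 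Second, a bound of the mass of a Borel set by its capacity: Hölder gives $\mu(U(s))\le\norm{g}_{L^q}\mathrm{Vol}(U(s))^{1/q'}$ with $q'=q/(q-1)<\infty$ --- this is exactly where $q>1$ is used, and it is indispensable --- and the Alexander--Taylor type volume--capacity inequality gives $\mathrm{Vol}(E)\le C(n)\exp(-c(n)/\mathrm{cap}(E,\B)^{1/n})$. Combining, for $0\le s<t$,
\[
(t-s)^n a(t)\le C(n,q)\norm{g}_{L^q}\exp\!\Big(-\tfrac{c(n,q)}{a(s)^{1/n}}\Big).
\]
Since the right-hand side decays faster than any power of $a(s)$, a De Giorgi type iteration closes: using the crude bound $a(s)\le C(n,q)\norm{g}_{L^q}/s^n$ (the case $s=0$ of the first estimate) to fix $s_0$ of size comparable to $\norm{g}_{L^q}^{1/n}$ with $a(s_0)\le 1$, I would build $s_0<s_1<\cdots$ along which $a(s_j)\to 0$ super fast while $\sum_j(s_{j+1}-s_j)\le C(n,q)\norm{g}_{L^q}^{1/n}$. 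As a nonempty open set has strictly positive capacity, $a(s_\infty)=0$ forces $U(s_\infty)=\emptyset$, i.e. $\sup_{\B}(-\rho)\le s_\infty\le C(n,q)\norm{g}_{L^q}^{1/n}$.

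\textbf{Existence and regularity.} For $g$ continuous and $\geq 0$ I would get a solution $\rho\in\PSH(\B)\cap C^0(\clos{\B})$ of \eqref{MAdir} with $\psi=0$ by the Perron method of Bedford--Taylor \cite{BT76,BT82} (the upper envelope of subsolutions; continuity up to $\partial\B$ via the plurisubharmonic barriers the ball admits, e.g. $\norm{z}^2-1$), or by smoothing $g$ and combining classical smooth solvability with compactness. For general $g\in L^q(\B)$, $g\geq 0$, choose continuous $g_j\geq 0$ with $g_j\to g$ in $L^q(\B)$ and $\norm{g_j}_{L^q}\le 2\norm{g}_{L^q}$, and let $\rho_j$ solve with datum $g_j$; the a priori estimate gives $0\ge\rho_j\ge -C(n,q)\norm{g}_{L^q}^{1/n}$ uniformly. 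Here I would invoke Ko\l odziej's stability estimate \cite{Kol98}, proved by the same capacity techniques, in the form $\norm{\rho_j-\rho_k}_{L^\infty(\B)}\le C(n,q,\norm{g}_{L^q})\norm{g_j-g_k}_{L^q}^{\gamma}$ for some $\gamma=\gamma(n,q)>0$, so that $(\rho_j)$ is Cauchy in $C^0(\clos{\B})$; its uniform limit $\rho$ is plurisubharmonic, vanishes on $\partial\B$, and satisfies $(\ddc\rho)^n=\lim_j(\ddc\rho_j)^n=\lim_j g_j\,\dV=g\,\dV$ by continuity of the Monge--Amp\`ere operator under uniform limits of bounded plurisubharmonic functions \cite{BT82}. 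Letting $j\to\infty$ in the a priori bound completes the proof.

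\textbf{Where the difficulty lies.} All the substance is in the a priori estimate, and within it the two analytic inputs: the Alexander--Taylor volume--capacity inequality and the fact that the feedback $(t-s)^n a(t)\lesssim\norm{g}_{L^q}\exp(-c/a(s)^{1/n})$ has an exponential (rather than power-law) gain, which is precisely what makes the iteration terminate with a bound proportional to $\norm{g}_{L^q}^{1/n}$. The one further point that is not formal --- and above is bypassed only by citing \cite{Kol98} --- is interior continuity of $\rho$ when $g$ is merely in $L^q$, which is exactly what the stability estimate provides.
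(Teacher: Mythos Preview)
The paper does not prove Theorem~\ref{Kolodziej}; it is quoted verbatim as ``the following fundamental result from \cite{Kol98}'' and used as a black box. Your proposal is therefore not being compared against any argument in the paper --- there is none --- but rather against Ko\l odziej's original proof, and on that score your outline is faithful: the capacity function $a(s)$, the comparison-principle inequality $(t-s)^n a(t)\le\mu(U(s))$, the Alexander--Taylor volume--capacity estimate, and the De~Giorgi iteration are exactly the engine of \cite{Kol98}, and your handling of existence via approximation plus stability is the standard route. For the purposes of this paper, however, a one-line citation suffices.
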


Consequently, we see that Theorem \ref{thm rho} will be a straightforward consequence of this result if we manage to establish the following:

\begin{proposition}\label{plpvslpvisc}
Assume \eqref{cond rq} and let $g \in L^q(B_1)$ with $g \geq 0$.
If $\rho \in \PSH(\B) \cap C^0(\clos{\B})$ is a solution to $(\ddc \rho)^n\geq 4^n n!g \dV$ in $B_1$ in the sense of currents, then $\rho$ is a $W^{2,r}/L^q$-viscosity solution to $F_{\MA}(\rho) \geq g$ in $B_1$. 
\end{proposition}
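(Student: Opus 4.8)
The plan is to argue by contradiction directly from Definition~\ref{def visc sol}. Suppose $\rho$ is not a $W^{2,r}/L^q$-viscosity subsolution of $F_{\MA}(\DD\rho)\geq g$. Then there exist a lower semicontinuous $\varphi\in W^{2,r}_{\loc}(B_1)$, a number $\varepsilon>0$, a nonempty open set $U\subset B_1$, and a point $z_0\in U$ such that $F_{\MA}((D^2\varphi)(z))\leq g(z)-\varepsilon$ for a.e.\ $z\in U$ while $\rho-\varphi$ has a \emph{strict} local maximum at $z_0$. Since $r>n$, Sobolev embedding gives $\varphi\in C^0(U)$ (indeed $\varphi\in C^{1,\alpha}_{\loc}$ is even available, but continuity is what we use). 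The inequality $F_{\MA}((D^2\varphi)(z))\leq g(z)-\varepsilon$ forces, for a.e.\ $z\in U$, either $(D^2\varphi)(z)\notin\coneb_n$ (if the value is $-\infty$) or $\det (D^2\varphi)(z)\leq g(z)-\varepsilon$; in the latter case, since $g\geq 0$, we get $\det(D^2\varphi)(z)\leq g(z)$. In all cases $(\ddc\varphi)^n\leq 4^n n!\,g\,\dV$ holds a.e.\ where $\varphi$ is twice differentiable, and I claim this propagates to an inequality in the pluripotential sense on a small ball around $z_0$.

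The key step is to upgrade $\varphi$ to a plurisubharmonic function on a neighborhood of $z_0$ and to control its Monge--Amp\`ere measure. First shrink $U$ so that $z_0$ is the unique maximum point of $\rho-\varphi$ on $\clos{U}$; by adding a constant we may assume $(\rho-\varphi)(z_0)=0$, so $\rho<\varphi$ on $\clos U\setminus\{z_0\}$ and $\rho(z_0)=\varphi(z_0)$. Next, the difficulty is that $\varphi$ need not itself be plurisubharmonic (only $D^2\varphi\in\coneb_n$ a.e.\ is \emph{not} assumed either — all we know is the determinant inequality a.e.). To fix this, I would perturb: replace $\varphi$ by $\tilde\varphi(z)=\varphi(z)+\delta(\norm{z-z_0}^2-\eta)$ with $\delta>0$ small and $\eta>0$ chosen so that $\rho-\tilde\varphi$ still has a local max at $z_0$. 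Because $\rho$ is plurisubharmonic and $\rho-\tilde\varphi$ attains a local maximum at $z_0$, on a small ball $B:=B_s(z_0)$ the function $\tilde\varphi$ must "dominate from above" $\rho$ with equality at $z_0$; combined with the a.e.\ Hessian information we still have $\det D^2\tilde\varphi\leq g+C\delta$ a.e.\ on $B$, where the set where $D^2\varphi\notin\coneb_n$ can be discarded because there $\rho-\varphi$ cannot have an interior max (the comparison of a psh function with something failing to be psh from above). Then $\tilde\varphi\in W^{2,r}(B)\subset C^0(B)$ with $\det D^2\tilde\varphi \leq g+C\delta\in L^q(B)$ a.e., and a standard approximation/regularization of Sobolev functions (mollify, or use that $W^{2,r}$ functions with a.e.\ Hessian bound satisfy the corresponding pluripotential inequality, cf.\ the Alexandrov-type results behind \cite{Blo99}) yields $(\ddc\tilde\varphi)^n\leq 4^n n!\,(g+C\delta)\,\dV$ in the sense of currents on $B$.

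Now both $\rho$ and $\tilde\varphi$ are bounded plurisubharmonic on $B$ with $\rho\leq\tilde\varphi$ on $\partial B_\sigma(z_0)$ for $\sigma$ slightly less than $s$ and $\rho(z_0)=\tilde\varphi(z_0)$, while $(\ddc\rho)^n\geq 4^n n!\,g\,\dV\geq (\ddc\tilde\varphi)^n-4^nn!C\delta\,\dV$. Applying the comparison principle for the complex Monge--Amp\`ere operator (the domination principle, from \cite{BT82}, in the form: if $u,v$ are bounded psh, $u\geq v$ on $\partial D$ and $(\ddc u)^n\leq(\ddc v)^n$ then $u\geq v$ on $D$) to $u=\tilde\varphi$ and $v=\rho$ — after absorbing the $C\delta$ error by choosing $\delta$ small relative to the gap $\min_{\partial B_\sigma(z_0)}(\tilde\varphi-\rho)>0$ via a further quadratic correction — gives $\tilde\varphi\geq\rho$ on $B_\sigma(z_0)$ strictly, contradicting $\rho(z_0)=\tilde\varphi(z_0)$ being an interior equality point. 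This contradiction shows $\rho$ is a $W^{2,r}/L^q$-viscosity subsolution. The main obstacle is the middle step: rigorously passing from the a.e.\ pointwise determinant inequality for the merely-$W^{2,r}$ test function $\varphi$ (which may fail to be psh) to a genuine current inequality $(\ddc\tilde\varphi)^n\leq\cdots$ after the psh perturbation, and quantifying the perturbation so the comparison principle can be invoked cleanly; everything else is bookkeeping with strict maxima and constants.
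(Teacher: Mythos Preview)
Your argument has a genuine gap at the central step. You assert that ``both $\rho$ and $\tilde\varphi$ are bounded plurisubharmonic on $B$'', but this is nowhere established. Adding $\delta\norm{z-z_0}^2$ to $\varphi$ shifts $D^2\varphi$ by $\delta\,\Id$; this makes $\tilde\varphi$ plurisubharmonic only if the eigenvalues of $D^2\varphi$ are bounded below by $-\delta$ a.e.\ on $B$. Nothing in the hypothesis $F_{\MA}(D^2\varphi)\leq g-\varepsilon$ forces such a bound: that inequality is satisfied (with value $-\infty$) wherever $D^2\varphi\notin\coneb_n$, so $D^2\varphi$ may have arbitrarily negative eigenvalues on a set of positive measure near $z_0$. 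Your parenthetical claim that this set ``can be discarded because there $\rho-\varphi$ cannot have an interior max'' does not rescue the argument: a strict local maximum of $\rho-\varphi$ at $z_0$ yields no second-order information away from $z_0$, and even at $z_0$ gives nothing pointwise since $\rho$ is only continuous. Without $\tilde\varphi\in\PSH$, the Bedford--Taylor comparison principle is unavailable, and the argument collapses exactly where you yourself flag it (``the main obstacle is the middle step'').

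The paper avoids this by \emph{not} trying to make the test function plurisubharmonic. It proves a separate comparison result, Theorem~\ref{thm RT}, between a psh $u$ and a merely $W^{2,r}$ function $v$ under the one-sided hypothesis $(\ddc u)^n\geq((\ddc v)^n)^+$. The proof approximates $v$ in $W^{2,r}$ by smooth $v_j$ (here $r>n$ gives uniform convergence), solves the Dirichlet problem $(\ddc w_j)^n=((\ddc v_j)^n)^+$ with boundary data $v_j$, invokes Sibony's theorem \cite{Sib77}---which compares a bounded psh function with a $C^2$ (not necessarily psh) function---to get $w_j\leq v_j$, and passes to the limit via Ko\l odziej's stability estimate, using that $((\ddc v_j)^n)^+\to((\ddc v)^n)^+$ in $L^{r/n}$ with $r/n>1$. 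With Theorem~\ref{thm RT} in hand, Proposition~\ref{plpvslpvisc} reduces to a short case split on whether $\ddc\varphi\geq 0$ as a current. The missing idea in your attempt is precisely this detour through Sibony's $C^2$ comparison and the stability theorem; a direct psh perturbation of $\varphi$ cannot be made to work.
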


The proof of this proposition is the purpose of the next section.

\subsection{Comparison theorem for non $\PSH$ functions}

In this part, $B$ is any fixed open ball.
In order to prove Propostion \ref{plpvslpvisc}, we will use the following comparison theorem between $\PSH$ and $W^{2,r}$ functions, which is adapted from \cite[Theorem 5.1]{RT77} to our complex setting:

\begin{theorem}\label{thm RT}
Let $v \in W^{2,r}(B)$ with $r>n$.
Let $u \in \PSH(B)\cap C^0(\clos{B})$ satisfy
$$(\ddc u)^n \geq \left((\ddc v)^n\right)^{+} \quad \text{ in } B,$$
in the sense of currents, where $\left((\ddc v)^n\right)^{+}$ is by definition equal to $(\ddc v)^n$ if the current $\ddc v \geq 0$ (i.e. $D^2 v \in \clos{\coneb_n}$) and is equal to zero otherwise.
Then, we have
\begin{equation}\label{max on the b}
\max_{z \in B} \, (u(z)-v(z))
=\max_{z \in \partial B} \, (u(z)-v(z)).
\end{equation}
\end{theorem}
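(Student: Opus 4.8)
\textbf{Proof strategy for Theorem \ref{thm RT}.} The plan is to mimic the classical comparison argument of Rauch--Taylor \cite{RT77} for plurisubharmonic functions, but carefully tracking the fact that $v$ is only $W^{2,r}$ (hence, by Sobolev, $C^1$ but not necessarily $C^2$) and not plurisubharmonic. First I would reduce to a strict inequality: for $\delta>0$, set $v_\delta(z)=v(z)+\delta\norm{z}^2$ (or $v_\delta = v + \delta(\norm{z}^2 - \sup_B \norm{z}^2)$ so as not to change boundary values up to an $O(\delta)$ term), which adds a strictly positive multiple of $\beta$ to $\ddc v$; since adding a positive form only increases $(\ddc v)^n$ on the set where $\ddc v \geq 0$ and turns part of the complement into the region where it becomes $\geq 0$, one checks that $\left((\ddc v_\delta)^n\right)^+ \geq \left((\ddc v)^n\right)^+$ pointwise a.e., so $u$ still satisfies $(\ddc u)^n \geq \left((\ddc v_\delta)^n\right)^+$. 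It therefore suffices to prove \eqref{max on the b} for $v_\delta$ and let $\delta \to 0$. So from now on I may assume the extra strict positivity built into $v$.

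Next I would argue by contradiction: suppose $\max_{\clos B}(u-v) > \max_{\partial B}(u-v)$, so the maximum of $u-v$ is attained at an interior point and, after subtracting a constant, we may assume $\max_{\clos B}(u-v) = m > 0 = \max_{\partial B}(u - v)$ with $u \le v$ on $\partial B$. Fix a small $\eta>0$ and consider the open set $U = \{z \in B : u(z) > v(z) + m - \eta\}$, which is a nonempty open subset compactly contained in $B$ for $\eta$ small. The heart of the matter is the comparison-principle inequality
\[
\int_U (\ddc u)^n \;\leq\; \int_U \left((\ddc v)^n\right)^+,
\]
which, combined with the hypothesis $(\ddc u)^n \geq \left((\ddc v)^n\right)^+$, forces $(\ddc u)^n$ and $\left((\ddc v)^n\right)^+$ to agree on $U$; this on its own is not yet a contradiction, so one instead runs the sharper Bedford--Taylor comparison: on $U$ one compares $u$ with $\max(u, v + m - \eta)$, or better uses the classical fact that if $(\ddc u)^n \geq (\ddc w)^n$ with $w$ (pluri)subharmonic-like and $u \le w$ on $\partial U$ then $u \ge w$ on $U$, to derive that $u \le v + m - \eta$ on $U$, contradicting the definition of $U$. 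The delicate point is that $v$ is \emph{not} plurisubharmonic, so $(\ddc v)^n$ is not a priori a positive measure; this is exactly why the statement uses the truncation $\left((\ddc v)^n\right)^+$ and why one wants $v \in W^{2,r}$ with $r > n$: by Sobolev embedding $D^2 v \in L^r \hookrightarrow$ continuous functions after mollification control, so $(\ddc v)^n$ is a well-defined $L^{r/n}$ density (with $r/n > 1$), the set $\{\ddc v \ge 0\}$ is measurable, and one can approximate $v$ by smooth $v_\varepsilon \to v$ in $W^{2,r}$, for which the Bedford--Taylor theory applies directly, then pass to the limit using uniform convergence (from $W^{2,r} \hookrightarrow C^0$) of $v_\varepsilon$ and $L^1_{loc}$ convergence of the Monge--Amp\`ere densities.

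Concretely, the steps in order: (1) the strict-positivity reduction via $v \rightsquigarrow v+\delta\norm{z}^2$; (2) mollify to get $v_\varepsilon \in C^\infty(\clos{B'})$ on a slightly smaller ball with $v_\varepsilon \to v$ uniformly and $D^2 v_\varepsilon \to D^2 v$ in $L^r$; (3) on the region where $\ddc v_\varepsilon > 0$ apply the classical comparison principle of Bedford--Taylor \cite{BT76,BT82} between the plurisubharmonic function $u$ and $v_\varepsilon$, handling the region $\{\ddc v_\varepsilon \not> 0\}$ by noting $\left((\ddc v_\varepsilon)^n\right)^+$ vanishes there while contributing nothing that obstructs the inequality $u \le v_\varepsilon + (\text{boundary defect})$; (4) pass $\varepsilon \to 0$ using uniform convergence to transfer the inequality $\max_{\clos B}(u - v_\varepsilon) = \max_{\partial B}(u - v_\varepsilon)$ to $v$; (5) pass $\delta \to 0$. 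I expect the main obstacle to be step (3)--(4): making rigorous the comparison principle on the a.e.-defined, non-open set $\{\ddc v \ge 0\}$ for a merely $W^{2,r}$ function and controlling the Monge--Amp\`ere mass there in the limit — this is precisely where the hypothesis $r > n$ is essential, since it yields both the $C^0$ bound needed for the limit and the fact that $\det(D^2 v) \in L^{r/n}$ with exponent $> 1$, so that the truncated density $\left((\ddc v)^n\right)^+$ is a genuine (absolutely continuous) measure and the standard pluripotential comparison machinery of \cite{BT76,RT77} applies after approximation.
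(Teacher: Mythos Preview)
Your step (1) contains a logical error: you correctly observe that $\left((\ddc v_\delta)^n\right)^+ \geq \left((\ddc v)^n\right)^+$, but then conclude that $(\ddc u)^n \geq \left((\ddc v_\delta)^n\right)^+$. This does not follow from $(\ddc u)^n \geq \left((\ddc v)^n\right)^+$; the inequality goes the wrong way, so this reduction does not work (and in fact turns out to be unnecessary).

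More seriously, step (3) is where the real difficulty lies, and your sketch does not address it. You want to compare the plurisubharmonic function $u$ with the smooth but \emph{non-plurisubharmonic} function $v_\varepsilon$ on the ball. Restricting to the set $\{\ddc v_\varepsilon > 0\}$ does not help: you have no control of $u - v_\varepsilon$ on the boundary of that (a priori irregular) set, and the Bedford--Taylor comparison principle requires both functions to be plurisubharmonic on the domain in question. The integral inequality $\int_U (\ddc u)^n \leq \int_U \left((\ddc v)^n\right)^+$ that you invoke is likewise unavailable, for the same reason.

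The paper's argument sidesteps both issues by introducing an auxiliary Dirichlet problem. After approximating $v$ by smooth $v_j$, one solves $(\ddc w_j)^n = \left((\ddc v_j)^n\right)^+$ in $B$ with $w_j = v_j$ on $\partial B$, producing genuine plurisubharmonic functions $w_j$. A theorem of Sibony (comparison between a PSH function and a $C^2$, not necessarily PSH, function --- Theorem~\ref{sibony}) gives $w_j \leq v_j$. Ko\l odziej's $L^{r/n}$-stability estimate then yields $w_j \to w$ in $C^0(\clos{B})$, where $w$ solves $(\ddc w)^n = \left((\ddc v)^n\right)^+$ with $w = v$ on $\partial B$; hence $w \leq v$ in the limit. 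Now the standard Bedford--Taylor comparison between the two PSH functions $u$ and $w$ gives $\max_{\clos B}(u - w) = \max_{\partial B}(u - w) = \max_{\partial B}(u - v)$, and $w \leq v$ finishes the proof. The two ingredients you are missing are the PSH intermediary $w$ and Sibony's theorem (or an equivalent) to handle the non-plurisubharmonic $v_\varepsilon$; without them, step (3) as written does not go through.
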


We would to emphasize that an essential difference in the proof of this result below compared with the one of \cite[Theorem 5.1]{RT77} is that the complex Monge-Amp\`ere operator, contrary to the real one, is not continuous with respect to the weak convergence (see e.g. \cite[Section 3.8]{Kli91}).
We will circumvent this by exploiting the stability properties of the Monge-Amp\`ere operator.
This in particular is one of the reasons for the assumption $r>n$ in this section.

Before proving Theorem \ref{thm RT}, let us show how it leads to Proposition \ref{plpvslpvisc}:

\begin{proof}[Proof of Proposition \ref{plpvslpvisc}]
Let $\rho \in \PSH(\B) \cap C^0(\clos{\B})$ be a solution to $(\ddc \rho)^n\geq 4^n n!g \dV$ in $B_1$ in the sense of currents.
Let $\varphi \in W^{2,r}_{\loc}(B_1)$ and a nonempty open subset $U\subset B_1$ ($\varphi$ is continuous since $r>n$).
Assume that $u-\varphi$ has a strict maximum, say at $z_0$ and in some small ball $\clos{B_R(z_0)} \subset B_1$.
Assume by contradiction that
\begin{equation}\label{equ FMA}
F_{\MA}((D^2\varphi)(z))\leq g(z) \quad \mbox{ a.e. } z \in B_R(z_0).
\end{equation}
If we show that this implies $(\ddc \rho)^n \geq \left((\ddc \varphi)^n\right)^{+}$ in $B_R(z_0)$, then Theorem \ref{thm RT} gives a contradiction.
We have to distinguish two cases.
If the current $\ddc \varphi$ is not nonnegative, then $\left((\ddc \varphi)^n\right)^{+}=0$ by definition and the desired inequality holds since $(\ddc \rho)^n\geq 4^n n!g \dV$ and $g \geq 0$.
Assume then that the current $\ddc \varphi$ is nonnegative, that is $D^2 \varphi \in \clos{\coneb_n}$.
We have $\left((\ddc \varphi)^n\right)^{+}=4^n n! \det\left(D^2 \varphi \right) \dV$ by definition and the desired inequality will be proved if we show that $g \geq \det\left(D^2 \varphi \right)$ in $B_R(z_0)$.
On the measurable subset $\ens{z \in B_R(z_0) \quad \middle| \quad (D^2 \varphi)(z) \in \coneb_n}$ we have $F_{\MA}(D^2 \varphi)=\det(D^2 \varphi)$, so that the previous inequality holds thanks to \eqref{equ FMA}.
On the remaining set we have $D^2 \varphi \in \partial \coneb_n=\ens{0}$, so that previous inequality holds as well thanks to $g \geq 0$.

\end{proof}

For the proof of Theorem \ref{thm RT}, we need to recall two results.
The first one is the following comparison principle that follows from \cite[Theorem 4.1]{BT82} and which is by now a basic tool in pluripotential theory.

\begin{theorem}\label{compa BT}
Let $ u,v\in \PSH(B)\cap C^0(\clos{B})$ satisfy
$$(\ddc u)^n \geq (\ddc v )^n \quad \text{ in } B,$$
in the sense of currents.
Then, the same equality as in \eqref{max on the b} holds.
\end{theorem}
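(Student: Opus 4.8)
The plan is to derive this classical comparison principle from the Bedford--Taylor comparison estimate: if $\phi,\psi$ are bounded plurisubharmonic functions on a bounded domain $\Om'\subset\C^n$ with $\liminf_{z\to\partial\Om'}(\phi(z)-\psi(z))\geq 0$, then $\int_{\{\phi<\psi\}}(\ddc\psi)^n\leq\int_{\{\phi<\psi\}}(\ddc\phi)^n$ — a standard consequence of \cite[Theorem 4.1]{BT82}. First note that, as $u$ is continuous (being in $\PSH(B)\cap C^0(\clos B)$) and $v$ is continuous, $u-v$ is continuous on the compact set $\clos B$, attains its maximum there, and $\max_{\clos B}(u-v)\geq\max_{\partial B}(u-v)$ trivially. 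Writing $m:=\max_{\partial B}(u-v)$ and replacing $v$ by $v+m$ (which changes neither $v\in\PSH(B)\cap C^0(\clos B)$ nor $(\ddc v)^n$), we may assume $m=0$; it then suffices to prove $u\leq v$ in $B$.

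Next I would reduce to a strict boundary inequality. Suppose we have established the implication ``$u<v$ on $\partial B$ $\Rightarrow$ $u\leq v$ in $B$'' under the standing hypothesis $(\ddc u)^n\geq(\ddc v)^n$. Given only $u\leq v$ on $\partial B$, apply this to the pair $(u,\,v+\eta)$ for $\eta>0$: one has $u<v+\eta$ on $\partial B$ and $(\ddc u)^n\geq(\ddc v)^n=(\ddc(v+\eta))^n$, hence $u\leq v+\eta$ in $B$; letting $\eta\to 0$ gives $u\leq v$ in $B$. So assume now $u<v$ on $\partial B$, and argue by contradiction: suppose $\sup_B(u-v)>0$; by continuity and $u<v$ on $\partial B$, this value is attained at some interior point $z_1\in B$.

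Write $B=B_R(z_0)$ and, for small $\epsilon>0$, set $u_\epsilon:=u+\epsilon(\norm{z-z_0}^2-R^2)$, a plurisubharmonic function continuous on $\clos B$ with $u_\epsilon\leq u$ in $B$ and $u_\epsilon=u<v$ on $\partial B$. For $\epsilon<(u(z_1)-v(z_1))/R^2$ we still have $u_\epsilon(z_1)>v(z_1)$, so $U_\epsilon:=\{z\in B:u_\epsilon(z)>v(z)\}$ is non-empty and open; moreover $u_\epsilon-v$ is continuous on $\clos B$ and strictly negative on $\partial B$, so $\{u_\epsilon\geq v\}$ is a compact subset of $B$, whence $U_\epsilon\Subset B$ and $u_\epsilon=v$ on $\partial U_\epsilon$. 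On the other hand, approximating $u$ by a decreasing sequence of smooth plurisubharmonic functions, using Minkowski's determinant inequality $\det(A+B)\geq\det A+\det B$ for positive semidefinite matrices, and passing to the limit via the continuity of the Monge--Amp\`ere operator along decreasing sequences of bounded plurisubharmonic functions, one obtains $(\ddc u_\epsilon)^n\geq(\ddc u)^n+\epsilon^n(\ddc\norm{z-z_0}^2)^n=(\ddc u)^n+4^n n!\,\epsilon^n\,\dV$ in $B$, hence, by the hypothesis $(\ddc u)^n\geq(\ddc v)^n$,
$$(\ddc u_\epsilon)^n\geq(\ddc v)^n+4^n n!\,\epsilon^n\,\dV\quad\text{in }B.$$

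Finally I would apply the Bedford--Taylor estimate on the domain $U_\epsilon$ to the pair $\phi=v$, $\psi=u_\epsilon$: since $v=u_\epsilon$ on $\partial U_\epsilon$ (so $\liminf_{z\to\partial U_\epsilon}(v-u_\epsilon)(z)=0$) and $\{v<u_\epsilon\}=U_\epsilon$ there, and since both relevant integrals are finite (as $U_\epsilon\Subset B$), it gives $\int_{U_\epsilon}(\ddc u_\epsilon)^n\leq\int_{U_\epsilon}(\ddc v)^n$. Combining with the displayed inequality yields $4^n n!\,\epsilon^n\,\abs{U_\epsilon}\leq 0$, which is absurd since $U_\epsilon$ is non-empty and open. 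Therefore $u\leq v$ in $B$, i.e. $\max_{\clos B}(u-v)=\max_{\partial B}(u-v)$. The two points requiring care — and the closest thing to a genuine obstacle here — are guaranteeing $U_\epsilon\Subset B$, which is precisely why the reduction to a strict boundary inequality is performed, and the super-additivity $(\ddc(u+g))^n\geq(\ddc u)^n+(\ddc g)^n$ for bounded $u$ and smooth psh $g$; both are standard facts of Bedford--Taylor theory.
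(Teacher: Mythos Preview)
The paper does not supply its own proof of this statement: it simply records it as ``the following comparison principle that follows from \cite[Theorem 4.1]{BT82} and which is by now a basic tool in pluripotential theory.'' Your argument is precisely the standard derivation of the domination principle from the Bedford--Taylor comparison inequality $\int_{\{\phi<\psi\}}(\ddc\psi)^n\le\int_{\{\phi<\psi\}}(\ddc\phi)^n$, and it is correct; the reduction to a strict boundary inequality, the strictly plurisubharmonic perturbation $u_\epsilon$, the super-additivity $(\ddc(u+\epsilon\norm{z-z_0}^2))^n\ge(\ddc u)^n+4^n n!\,\epsilon^n\,\dV$, and the final contradiction are all standard and properly justified. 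In short, you have written out exactly the proof the paper is alluding to by its citation.
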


The second result that will be needed is \cite[Théorème 1]{Sib77} coupled with \cite[Theorem 3.9]{Blo96}:

\begin{theorem}\label{sibony}
Let $v\in C^2(B)\cap C^0(\clos{B})$.
Let $u\in \PSH(B)\cap C^0(\clos{B})$ satisfy
$$(\ddc u)^n \geq (\ddc v )^n \quad \text{ in } B,$$
in the sense of currents.
Then, the same equality as in \eqref{max on the b} holds.
\end{theorem}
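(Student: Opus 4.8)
The plan is to deduce the statement from the comparison principle for plurisubharmonic functions, Theorem~\ref{compa BT} --- this is exactly the route taken in \cite[Théorème~1]{Sib77} combined with \cite[Theorem~3.9]{Blo96}, whose argument we outline. The only genuine difficulty is that $v$ is not assumed plurisubharmonic, so that neither $v$ nor the natural competitor $\max(u,v+c)$ is directly a plurisubharmonic function; this is handled using the $C^2$-regularity of $v$ and the stability of the complex Monge--Amp\`ere operator, the latter being needed because, unlike its real counterpart, the operator $\rho\mapsto(\ddc\rho)^n$ is not continuous with respect to weak convergence (see e.g.\ \cite[Section~3.8]{Kli91}). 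It is convenient to reduce first to the case $v\in C^\infty$: writing $B=B_R(a)$, one mollifies $v$ on slightly smaller concentric balls to obtain $v_\delta\in C^\infty$ with $D^2v_\delta\to D^2v$ locally uniformly, so that $(\ddc v_\delta)^n\le(\ddc v)^n+o(1)\,\dV\le(\ddc u)^n+o(1)\,\dV$; the error is absorbed by replacing $u$ with the plurisubharmonic function $u_\eta(z)=u(z)+\eta\norm{z-a}^2$, for which $(\ddc u_\eta)^n\ge(\ddc u)^n+\eta^n(\ddc\norm{z-a}^2)^n=(\ddc u)^n+4^n n!\,\eta^n\,\dV$ (discarding the nonnegative mixed terms in the expansion), upon choosing $\eta=\eta(\delta)\to0$ with $4^n n!\,\eta^n$ dominating the $o(1)$. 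Proving the statement for each pair $(u_\eta,v_\delta)$ and letting $\delta\to0$, using the uniform convergence $u_\eta\to u$, $v_\delta\to v$ and the continuity of $u,v$ on $\clos B$, yields it for $(u,v)$.

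So assume $v\in C^\infty(B)\cap C^0(\clos B)$ and, for contradiction, that $M:=\max_{\clos B}(u-v)>m:=\max_{\partial B}(u-v)$. Fix $s$ with $m<s<M$ and set $U=\ens{z\in B : u(z)>v(z)+s}$; by continuity $U$ is nonempty and open, $\clos U\subset B$, and $u=v+s$ on $\partial U$. Let $w=\max(u,v+s)$. Since $v\in C^2$, there is $C>0$ with $D^2v\ge-C\,\Id$ on a neighbourhood of $\clos B$, so $w+C\norm{z-a}^2=\max\bigl(u+C\norm{z-a}^2,\,v+s+C\norm{z-a}^2\bigr)$ is plurisubharmonic and bounded; hence $(\ddc w)^n$, defined by expanding $\bigl(\ddc(w+C\norm{z-a}^2)-C\,\ddc\norm{z-a}^2\bigr)^n$ via the Bedford--Taylor calculus, is a well-defined signed measure. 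It coincides with $(\ddc u)^n$ on $U$ and with $(\ddc v)^n$ on $B\setminus\clos U$, whence $(\ddc w)^n\ge(\ddc v)^n=(\ddc(v+s))^n$ on $B$; moreover $w\ge v+s$ everywhere, $w\equiv v+s$ near $\partial B$ (because $u\le v+s$ there, as $s>m$), while $w-v$ attains the value $M>s$ at an interior maximum point of $u-v$.

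The remaining step, and the main obstacle, is to run the comparison principle against the non-plurisubharmonic competitor $v+s$: one must show that $(\ddc w)^n\ge(\ddc(v+s))^n$ together with $w=v+s$ on $\partial B$ forces $w\le v+s$ on $B$. This is precisely where the stability of the Monge--Amp\`ere operator substitutes for its missing weak continuity, as in \cite{Sib77,Blo96}: on the open set $\ens{z\in B : D^2v(z)>0}$ the three functions $u$, $v+s$, $w$ are genuinely plurisubharmonic, so Theorem~\ref{compa BT} applies there verbatim, while on the complementary closed set, where $v$ fails to be strictly plurisubharmonic, one argues by approximation exploiting the stability of $(\ddc\rho)^n$ under uniform perturbations of $\rho$. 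Granting this, $w\le v+s$ on $B$, hence $u\le v+s$ on $B$, contradicting $U\neq\emptyset$; therefore $M=m$, which is the asserted identity \eqref{max on the b}.
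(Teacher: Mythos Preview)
The paper does not actually give a proof of Theorem~\ref{sibony}; it quotes the result from \cite[Théorème~1]{Sib77} together with \cite[Theorem~3.9]{Blo96}. So there is no ``paper's own proof'' to compare against, only the cited literature.

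That said, your sketch has a genuine gap at precisely the point you flag with ``Granting this''. After introducing $w=\max(u,v+s)$ you still need a comparison between the quasi-plurisubharmonic $w$ and the $C^2$, non-plurisubharmonic $v+s$ --- which is essentially the statement you are trying to prove. The attempted splitting does not rescue this: Theorem~\ref{compa BT} requires both competitors to be plurisubharmonic on the \emph{entire} domain, with the boundary inequality on $\partial B$; applying it on the open set $\{D^2v>0\}$ would need control of $w-(v+s)$ on the boundary of that set, which you do not have. And the phrase ``one argues by approximation exploiting the stability of $(\ddc\rho)^n$'' is not an argument --- adding $\epsilon\norm{z}^2$ to $v$ to make it plurisubharmonic increases $(\ddc v)^n$, so the hypothesis $(\ddc u)^n\ge(\ddc v_\epsilon)^n$ is lost; conversely, adding it to $u$ destroys the location of the maximum.

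The route actually taken in \cite{Sib77,Blo96} is more local and does not pass through $w=\max(u,v+s)$. One first observes (via the sub-mean-value property of $u$ along complex lines) that at any interior maximum point $z_0$ of $u-v$ the complex Hessian $D^2v(z_0)$ must be nonnegative. Then B{\l}ocki's characterization of the Monge--Amp\`ere density shows, roughly, that since $v+M$ touches $u$ from above at $z_0$ one has $(\ddc u)^n\le 4^n n!\det(D^2v(z_0))\,\dV$ in an averaged sense near $z_0$; comparing with the hypothesis $(\ddc u)^n\ge(\ddc v)^n$ (after the usual $\epsilon\norm{z-z_0}^2$ strictification) yields the contradiction. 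This is the content that your final paragraph gestures at but does not supply.
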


Note that Sibony's theorem states (in modern terminology) that solutions of $(\ddc u)^n\geq f \dV$ (for continuous $f$) in the sense of currents are $C$-viscosity subsolutions (see the proof of Proposition \ref{plpvslpvisc}).

We have now all the ingredients to prove the desired comparison theorem.

\begin{proof}[Proof of Theorem \ref{thm RT}]
We adapt the proof of \cite[Theorem 5.1]{RT77}.
First of all, we note that there exists a sequence $(v_j)_j \subset C^{\infty}(\C^n)$ such that
\begin{gather}
v_j \to v \text{ in } W^{2,r}(B), \label{CV vj v} \\
v_j \rightarrow v \text{ in } C^0(\clos{B}). \label{CV vj v C0}
\end{gather}
Indeed, since $B$ is a ball, there exists an extension operator $E:W^{2,r}(B) \rightarrow W^{2,r}(\C^n)$.
Let then $\varphi \in C^{\infty}_c(\C^n)$ be a cut-off function which is equal to $1$ in $B$ and $0$ outside an open set $\omega$ such that $\omega \supset \clos{B}$.
Thus, $\hat{v}=\varphi Ev \in W^{2,r}_0(\omega)$ and there exists a sequence $(\hat{v}_j)_j \subset C^{\infty}_c(\omega)$ such that $\hat{v}_j \to \hat{v}$ in $W^{2,r}(\omega)$.
We define $v_j$ as the extension of $\hat{v}_j$ by zero outside $\omega$, which then satisfies \eqref{CV vj v}.
The convergence \eqref{CV vj v C0} is consequence of \eqref{CV vj v} by the Sobolev embedding $W^{2,r}(B) \subset C^0(\clos{B})$ since $r>n$.

Let us now introduce the functions
$$
g_j=
\begin{cases}
4^n n! \det\left(D^2 v_j \right), & \text{ if } \ddc v_j \geq 0, \\
0 & \text{ otherwise, }
\end{cases}
\quad
g=
\begin{cases}
4^n n! \det\left(D^2 v \right), & \text{ if } \ddc v \geq 0, \\
0 & \text{ otherwise. }
\end{cases}
$$
Thanks to \eqref{CV vj v}, we have
\begin{equation}\label{CV gj g}
g_j \rightarrow g \text{ in } L^{r/n}(B).
\end{equation}
Since $g_j, g \geq 0$, from \cite{BT82} and \cite{Kol98} there exist $w_j,w \in \PSH(B) \cap C^0(\clos{B})$, respective unique solutions to
$$
\begin{cases}
(\ddc w_j)^n=g_j \dV, & \text{in } B, \\
w_j=v_j , & \text{on }  \partial B,
\end{cases}
\quad
\begin{cases}
(\ddc w)^n=g \dV, & \text{in } B, \\
w=v , & \text{on }  \partial B.
\end{cases}
$$
Moreover, Kolodziej's stability theorem (contained in the proof of \cite[Theorem 3]{Kol96}) states that
$$
\norm{w_j-w}_{L^{\infty}(B)}\leq \norm{v_j-v}_{L^{\infty}(\partial B)}+C(n,r)\norm{g_j-g}_{L^{\frac{r}{n}}(B)}^{\frac{1}{n}}.
$$
It follows that $w_j \rightarrow w$ in $C^0(\clos{B})$ thanks to \eqref{CV vj v C0} and \eqref{CV gj g}.

Since $(\ddc w_j)^n \geq (\ddc v_j)^n$ with $w_j=v_j$ on $\partial B$ and $v_j \in C^2(B)$, we can apply Theorem \ref{sibony} to obtain that
$$w_j\leq v_j \quad \text{ in } \clos{B}.$$

It follows that
\begin{align*}
w- v & = w-w_j+w_j-v_j+v_j-v \\
& \leq w-w_j +v_j-v.
\end{align*}
Passing to the limit, we deduce that
\begin{equation}\label{w less than v}
w-v \leq 0 \quad \text{ in } \clos{B}.
\end{equation}
To complete the proof, note that for $z\in B$, 
\begin{eqnarray*}
u(z)-v(z)&=& u(z)-w(z)+w(z)-v(z) \\
&\leq & u(z)-w(z) \quad \text{(by \eqref{w less than v})}, \\
&\leq & \max_{z\in \partial B} \, (u(z)-w(z)) \quad \text{(by Theorem \ref{compa BT} since $(\ddc u)^n \geq g \dV=(\ddc w)^n$)}, \\
&= & \max_{z\in \partial B} \, (u(z)-v(z)) \quad \text{(since $w=v$ on $\partial B$)}.
\end{eqnarray*}

\end{proof}

\section{Maximum principle for $W^{2,r}/L^p$-viscosity subsolutions}\label{sect max PP}

The starting point of any viscosity theory is the maximum principle.
Below we prove a version adapted to our setting.
Similar result for smoother functions can be found in \cite[Theorem 1]{RS64}.

\begin{theorem}\label{linmaxprin}
Let $\aij:\Omega \longrightarrow \C$ ($1 \leq i,j \leq n$) be such that the coefficient matrix $(a^{i\bar{j}}(z))_{1 \leq i,j \leq n}$ is Hermitian and nonnegative for a.e. $z \in \Omega$.
Assume that there exists $M>0$ such that, for a.e. $z \in \Omega$,
\begin{equation}\label{cond max PP}
\sum_{i=1}^n a^{i\bar{i}}(z) \geq M.
\end{equation}
Let $u \in C^0(\clos{\Omega})$ be a $W^{2,r}/L^p$-viscosity solution ($r,p \geq 1$) of
\begin{equation}\label{equ max PP}
\sum_{i,j=1}^n\aij(z) u_{i\bar{j}} \geq 0 \quad \mbox{ in } \Omega.
\end{equation}
Then, $\max_{\clos{\Omega}} u=\max_{\partial \Omega} u$.
\end{theorem}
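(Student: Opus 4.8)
The plan is to prove this maximum principle by contradiction, using the strict-maximum structure built into Definition~\ref{def visc sol} together with a standard auxiliary-function perturbation. Suppose that $\max_{\clos{\Omega}} u > \max_{\partial\Omega} u$, so that $u$ attains its maximum over $\clos{\Omega}$ at some interior point. The difficulty is that a genuine maximum of $u$ need not be a \emph{strict} maximum of $u-\varphi$ for any fixed test function $\varphi$, and the operator here is only degenerate elliptic (the matrix $(\aij)$ is merely nonnegative), so we cannot invoke the usual uniformly elliptic machinery. The device is to subtract a small strictly plurisubharmonic quadratic bump to both create strictness and to feed the trace lower bound \eqref{cond max PP} into the equation.

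Concretely, I would set $\varphi_\delta(z) = -\delta \norm{z-z_0}^2$ for small $\delta>0$, where $z_0$ is chosen near an interior maximum point; more robustly, fix any interior point $z_1$ with $u(z_1) > \max_{\partial\Omega} u$ and consider $w_\delta = u - \varphi_\delta = u + \delta\norm{z-z_0}^2$ won't be right --- rather one wants $u$ to be compared from above, so take the test function $\varphi(z) = c - \delta\norm{z-z_0}^2$ and look at $u - \varphi = u - c + \delta\norm{z-z_0}^2$. Hmm, let me instead argue as follows. For $\delta > 0$ small, the function $u(z) + \delta \norm{z}^2$ still exceeds $\max_{\partial\Omega} u + \delta\,\sup_{\partial\Omega}\norm{z}^2$ somewhere in $\Omega$ provided $\delta$ is small (since the strict inequality $\max_{\clos\Omega} u > \max_{\partial\Omega} u$ is open in $\delta$), so $u(z) + \delta\norm{z}^2$ attains a maximum over $\clos\Omega$ at an interior point $z_\delta \in \Omega$. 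Equivalently, $u - \varphi_\delta$ with $\varphi_\delta(z) = -\delta\norm{z}^2$ has an interior maximum; to make it \emph{strict}, replace $\varphi_\delta$ by $\varphi_\delta(z) = -\delta\norm{z}^2 - \eta\norm{z - z_\delta}^2$ for tiny $\eta>0$, or simply perturb by an arbitrarily small strictly concave-in-the-right-sense term; then $u - \varphi_\delta$ has a strict local maximum near $z_\delta$ in a small open ball $U \subset \Omega$.

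Now $\varphi_\delta$ is smooth (hence in $W^{2,r}_{\loc}$), and its complex Hessian is $(D^2\varphi_\delta)_{i\bar j} = -(\delta+\eta)\,\delta_{ij}$ (a constant negative multiple of the identity), so
$$\sum_{i,j=1}^n \aij(z)\,(\varphi_\delta)_{i\bar j}(z) = -(\delta+\eta)\sum_{i=1}^n a^{i\bar i}(z) \leq -(\delta+\eta) M < 0 \quad \text{a.e. } z \in U,$$
using the trace bound \eqref{cond max PP}. Hence, choosing $\varepsilon = (\delta+\eta)M/2 > 0$ (or any $0 < \varepsilon < (\delta+\eta)M$), we have
$$\sum_{i,j=1}^n \aij(z)\,(D^2\varphi_\delta)(z)_{i\bar j} \leq 0 - \varepsilon = -\varepsilon \leq f(z) - \varepsilon \quad \text{a.e. } z \in U$$
with $f \equiv 0$, which is precisely the test-function hypothesis in Definition~\ref{def visc sol} for the operator $F(z,A) = \sum \aij(z) A_{i\bar j}$ and right-hand side $0$. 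Since $u$ is a $W^{2,r}/L^p$-viscosity subsolution of \eqref{equ max PP}, the definition forbids $u - \varphi_\delta$ from having a strict local maximum in $U$ --- contradicting the construction above. Therefore no interior maximum can occur, i.e. $\max_{\clos\Omega} u = \max_{\partial\Omega} u$.

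The main obstacle is purely bookkeeping: ensuring that the perturbed function $u - \varphi_\delta$ genuinely has a \emph{strict} interior local maximum on an open set $U$ compactly contained in $\Omega$ (not on the boundary and not merely a non-strict one), and that the chosen $\varepsilon$ is strictly positive and independent of the (irrelevant) $f=0$. Both are handled by the two-parameter bump $-\delta\norm{z}^2 - \eta\norm{z-z_\delta}^2$: the $\delta$-term pushes the maximum into the interior using the open strict inequality $\max_{\clos\Omega}u > \max_{\partial\Omega}u$, and the $\eta$-term enforces strictness while still contributing a negative definite constant to the complex Hessian so that \eqref{cond max PP} can be exploited. No continuity or boundedness of the coefficients $\aij$ is needed beyond measurability, consistently with the remarks following Definition~\ref{def visc sol}.
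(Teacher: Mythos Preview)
Your overall strategy is right, but there is a sign error that breaks the key step. You set $\varphi_\delta(z) = -\delta\norm{z}^2 - \eta\norm{z-z_\delta}^2$, so that
\[
u - \varphi_\delta \;=\; \bigl(u + \delta\norm{z}^2\bigr) + \eta\norm{z-z_\delta}^2.
\]
Since $z_\delta$ maximizes $v:=u+\delta\norm{z}^2$, adding the nonnegative term $\eta\norm{z-z_\delta}^2$ \emph{raises} the value away from $z_\delta$; it can move the maximum and certainly does not make it strict at $z_\delta$. The correct choice is
\[
\varphi_\delta(z) \;=\; -\delta\norm{z}^2 + \eta\norm{z-z_\delta}^2, \qquad 0<\eta<\delta,
\]
so that $u-\varphi_\delta = v - \eta\norm{z-z_\delta}^2$ and, for $z\neq z_\delta$,
\[
(u-\varphi_\delta)(z)\;\leq\; v(z_\delta) - \eta\norm{z-z_\delta}^2 \;<\; v(z_\delta) \;=\; (u-\varphi_\delta)(z_\delta),
\]
a strict global (hence local) maximum at the interior point $z_\delta$. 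The complex Hessian is then $(D^2\varphi_\delta)_{i\bar j} = (\eta-\delta)\,\delta_{ij}$, still a negative multiple of the identity because $\eta<\delta$, whence $\sum_{i,j}\aij(\varphi_\delta)_{i\bar j}\leq -(\delta-\eta)M$ a.e.\ by \eqref{cond max PP}, and the contradiction with Definition~\ref{def visc sol} follows with $\varepsilon=(\delta-\eta)M/2$. (This ``add $\eta\norm{z-z_\delta}^2$ to the test function'' is exactly the device the paper uses once a maximizer of $u-h$ has been located.)

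With this fix your argument is correct and in fact more economical than the paper's. The paper works directly with the maximum set $K=\{u=\max_{\clos\Omega}u\}$, treats separately the case where $K$ has an isolated point, and in the general case constructs an \emph{ad hoc} quadratic barrier after touching $K$ from outside by a sphere and carefully analyzing a thin slab $U_\eta$. Your idea of first passing to $u+\delta\norm{z}^2$ (whose maximum over $\clos\Omega$ is interior for small $\delta$, since $\max_{\clos\Omega}u>\max_{\partial\Omega}u$ and $\Omega$ is bounded) and only then enforcing strictness handles all cases at once and uses nothing beyond \eqref{cond max PP} and the nonnegativity of $(a^{i\bar j})$.
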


\begin{proof}
Suppose on contrary that the value $\max_{\clos{\Om}}u$ is reached only in $\Omega$ and let 
$$K=\ens{z\in \clos{\Omega} \quad \middle| \quad u(z)=\max_{\clos{\Om}}u}.$$
Clearly, $K$ is compact and, by assumption, $K \subset \Omega$.
Our goal will be to construct a strictly concave polynomial barrier at (some) point of $K$.
This coupled with the properties of our linear operator would lead to a contradiction.

Assume first that $K$ is reduced to a point, or more generally that it has an isolated point $z_0$.
Let then $R>0$ be fixed such that $B_R(z_0) \subset \Omega$ and such that no other maximum lies in $B_R(z_0)$.
Set $h(z)=-\varepsilon\norm{z-z_0}^2+u(z_0)$.
For $\epsilon>0$ small enough, the maximum of the difference $u-h$ is necessarily achieved inside $B_R(z_0)$, say at some point $z_1$.
Then, for the smooth function $\varphi(z)=h(z)+\frac{\epsilon}{2}\norm{z-z_1}^2$, the difference $u-\varphi$ has a strict maximum in $B_R(z_0)$.
As $\varphi_{i\bar{j}}(z)=-\frac{\epsilon}{2} \delta_{ij}$, we have
$$
\sum_{i,j=1}^n\aij(z) \varphi_{i\bar{j}}(z)
=-\frac{\epsilon}{2} \sum_{i=1}^n a^{i\bar{i}}(z)
\leq -\frac{\epsilon}{2} M,
$$
and hence $\varphi$ is a valid test function, which contradicts the fact that $u$ is a $W^{2,r}/L^p$-viscosity solution of \eqref{equ max PP} (see Definition \ref{def visc sol}).

In general the set $K$ need not contain isolated points and we proceed differently.
Pick a point $x_0\in K$ and inflate the balls centered at $x_0$ up until the whole set $K$ is inside.
If $K$ is not reduced to a point, then $R=\max_{z \in K} \norm{z-x_0}$ is positive and we have $K\subset \clos{B_R(x_0)}$ and $K \cap \partial B_R(x_0) \neq \emptyset$.
Let then $z_0\in K \cap \partial B_R(x_0)$.
Rotating and shifting coordinates if necessary, we may assume that $x_0=(0,\cdots,0,R)$, $K\subset \ens{\norm{z'}^2+|z_n-R|^2\leq R^2}$ and $z_0=(0,\cdots,0)\in\partial K$ (observe that the assumptions of the theorem are invariant under such transformations).
Note that any point $z=(z',x_n+iy_n) \in K$ has to satisfy
$$x_n\geq\frac{y_n^2+\norm{z'}^2}{2R}.$$

Then fixing a constant $C>2R$, for any $\eta>0$  the set
\begin{multline*}
\hat{U}_\eta=\ens{z \in \C^n \quad \middle| \quad  \real z_n=-\eta, \quad (\imag z_n)^2+\norm{z'}^2\leq C\eta}
\\
\cup
\ens{z \in \C^n \quad \middle| \quad |\real z_n|\leq \eta, \quad (\imag z_n)^2+\norm{z'}^2=C\eta},
\end{multline*}
is disjoint from $K$.
Since $\hat{U}_\eta$ is compact, it follows that $\max_{\hat{U}_\eta} u<\max_{\clos{\Om}} u=u(0)$, so that there is a $\delta>0$, such that
\begin{equation}\label{dist to zero}
u|_{\hat{U}_\eta}<u(0)-2\delta.
\end{equation}

Since $0 \in K \subset \Omega$, there is a ball $B_r(0) \subset \Omega$ and for $\eta$ small enough so that $\eta^2+C\eta<r^2$, the domain
$$U_\eta=\ens{z \in \C^n \quad \middle|\quad \ |\real z_n|<\eta, \quad (\imag z_n)^2+\norm{z'}^2<C\eta},$$
is such that $\clos{U_\eta} \subset \Omega$.
Fix such an $\eta$ and set $U=U_\eta$.

Consider now the quadratic polynomial
$$h(z)=-\frac{\delta}{4C\eta}(\norm{z'}^2+(\imag z_n)^2)-\frac{\delta}{3\eta^2}(\real z_n-\eta)^2+\frac{\delta}{3}+u(0).$$

By continuity, $u-h$ has a maximum on $\clos{U}$.
Let us show that this value is necessarily reached in $U$.
On $\clos{U}$, we have
$$h(z)\geq -\frac{\delta}{4}-\frac{\delta}{3\eta^2}(\real z_n-\eta)^2+\frac{\delta}{3}+u(0).$$
On  $\hat{U}_\eta$, using \eqref{dist to zero} we obtain
$$h(z)\geq -\frac{5\delta}{4}+u(0)>u(z).$$
On the remaining part of $\partial U$, i.e. on the piece where $ \real z_n=\eta$, we simply use that $0 \in K$:
$$h(z)\geq \frac{\delta}{12}+u(0)>u(0)\geq u(z).$$
In summary, $u-h<0$ on $\partial U$.
Since $0 \in U$ with $u(0)-h(0)=0$, the maximum of $u-h$ has to be reached into $U$.
We also have $h_{i\bar{j}}(z)=-\varepsilon_i \delta_{ij}$ for some $\varepsilon_i>0$.

The conclusion is now as before: define $\varphi(z)=h(z)+\frac{\epsilon}{2}\norm{z-z_1}^2$, where $z_1$ is a point of maximum of $u-h$ in $U$, so that the difference $u-\varphi$ now has a strict maximum in $U$; taking $\epsilon<\epsilon_i$ and using $a^{i\bar{i}} \geq 0$, we have
$$
\sum_{i,j=1}^n\aij(z) \varphi_{i\bar{j}}(z)
=- \sum_{i=1}^n a^{i\bar{i}}(z)\left(\varepsilon_i-\epsilon\right)
\leq - M\min_{1 \leq i \leq n} \left(\varepsilon_i-\epsilon\right),
$$
and hence $\varphi$ is a valid test function, which contradicts the fact that $u$ is a $W^{2,r}/L^p$-viscosity solution of \eqref{equ max PP}.
\end{proof}

\section{Proof of the main result}\label{sect proof}

In this section we finally prove Theorem \ref{main thm}.
All along this section, fix a sufficiently large ball containing $\Om$. 
Without loss of generality, we assume that it is $B_1=B_1(0)$.
We then extend $g$ by zero in $B_1 \backslash \Omega$ (still denoted by $g$).

Let then $\rho \in C^0(\clos{B_1})$ be the corresponding $W^{2,r}/L^{p/n}$-viscosity solution of
$$F_{\MA}(\DD \rho) \geq (g_+)^n \quad \mbox{ in } B_1,$$
provided by Theorem \ref{thm rho} with $q=p/n$, whose hypotheses \eqref{cond rq} are satisfied thanks to our assumption \eqref{cond rp}.
It also satisfies $\rho=0$ on $\partial B_1$ and the Ko\l odziej's estimate
\begin{equation}\label{estim rho}
\sup_{B_1} \, (-\rho)\leq C(n,q) \norm{g_+}_{L^p(B_1)}.
\end{equation}

We will show that $-\rho$ is an upper barrier for $-u$ in $\Omega$.
The conclusion will then follow from \eqref{estim rho}.
Let $L_u$ be the linearization of $G$ about $D^2 u$:
$$L_u h= \sum_{i,j=1}^n G^{i \bar{j}}(z,(D^2 u)(z)) h_{i\bar{j}}.$$

First of all, we have the following lemma:

\begin{lemma}\label{w visc sol}
The function $\rho$ is a $W^{2,r}/L^p$-viscosity solution of 
$$L_u \rho \geq g_+
\quad \mbox{ in } B_1.$$
\end{lemma}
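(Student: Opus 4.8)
The plan is to deduce this lemma from Proposition~\ref{plpvslpvisc} (more precisely, from Theorem~\ref{thm rho}, which already gives that $\rho$ is a $W^{2,r}/L^{p/n}$-viscosity solution of $F_{\MA}(D^2\rho)\geq (g_+)^n$) together with the pointwise inequality between $L_u$ and $F_{\MA}$ coming from our structural hypotheses on $G$. First I would recall that, by Remark~\ref{rem hyp comp with MA}, the concavity and homogeneity of $G$ give the key linearized inequality \ref{lin concave}: for a.e.\ $z\in\Omega$ and every $B\in\Gamma(z)$,
$$\sum_{i,j=1}^n G^{i\bar j}(z,(D^2u)(z))\,B_{i\bar j}\geq G(z,B),$$
while the comparison hypothesis \ref{comparison} gives $G(z,P)\geq(\det P)^{1/n}$ for $P\in\coneb_n$. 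Combining these, for a.e.\ $z$ and every $P\in\coneb_n$ we get $L_u$ applied to $P$ (viewed as a constant-Hessian form) is at least $(\det P)^{1/n}=F_{\MA}(P)^{1/n}$; and the matrix $(G^{i\bar j}(z,(D^2u)(z)))$ is Hermitian nonnegative (ellipticity, Remark~\ref{rem hyp comp with MA}). The outside-cone convention \eqref{offcone} handles the case $P\notin\coneb_n$, where $F_{\MA}(P)=-\infty$.

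Next I would run the viscosity comparison directly from the definition. Let $\varphi\in W^{2,r}_{\loc}(B_1)$, $\varepsilon>0$, and $U\subset B_1$ open be such that $L_u\varphi\leq g_+-\varepsilon$ a.e.\ in $U$; I must show $\rho-\varphi$ has no strict local maximum in $U$. The natural route is to check that $\varphi$ is then also an admissible test function for the statement ``$\rho$ is a $W^{2,r}/L^{p/n}$-viscosity solution of $F_{\MA}(D^2\rho)\geq (g_+)^n$''. For this I need: at a.e.\ $z\in U$, $F_{\MA}((D^2\varphi)(z))\leq (g_+(z))^n-\varepsilon'$ for some $\varepsilon'>0$. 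Where $(D^2\varphi)(z)\notin\coneb_n$ this is automatic since $F_{\MA}=-\infty$ there. Where $(D^2\varphi)(z)\in\coneb_n$, the inequality \ref{lin concave}/\ref{comparison} with $B=P=(D^2\varphi)(z)$ gives $g_+-\varepsilon\geq L_u\varphi\geq (\det(D^2\varphi))^{1/n}$, i.e.\ $F_{\MA}(D^2\varphi)=\det(D^2\varphi)\leq(g_+-\varepsilon)^n$. Since $g_+\geq 0$ and $t\mapsto t^n$ is increasing on $[0,\infty)$, one has $(g_+-\varepsilon)^n\leq (g_+)^n-\varepsilon'$ for a suitable $\varepsilon'>0$ on the (bounded, since $r>n$ forces $\varphi$ continuous, hence $D^2\varphi\in L^r$ and $g_+\in L^p$) set $U$; a little care is needed here because this reduction is not uniform if $g_+$ is unbounded, so I would instead argue locally: shrink $U$ to a small ball around a putative strict maximum point, on which $\varphi$ is bounded, and there the exponent comparison is harmless, or simply observe that $g_+-\varepsilon\geq \det(D^2\varphi)^{1/n}$ already rules out $\det(D^2\varphi)\geq (g_+)^n$ on a set of positive measure — in fact one checks $F_{\MA}(D^2\varphi)\leq (g_+)^n-\tilde\varepsilon$ a.e.\ in $U$ holds with $\tilde\varepsilon=\min(\varepsilon^n, n\varepsilon\,(\mathrm{something}))$ via $a^n-b^n\geq (a-b)^n$ for $a\geq b\geq 0$. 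Then the fact that $\rho$ is a $W^{2,r}/L^{p/n}$-viscosity solution of $F_{\MA}(D^2\rho)\geq(g_+)^n$ yields that $\rho-\varphi$ has no strict local maximum in $U$, which is exactly the required conclusion.

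The main obstacle I anticipate is the passage from the scalar estimate $L_u\varphi\leq g_+-\varepsilon$ to a genuine strict inequality $F_{\MA}(D^2\varphi)\leq (g_+)^n-\varepsilon'$ with a single $\varepsilon'>0$ valid on all of $U$: because $g_+$ need not be bounded, the elementary inequality $(g_+-\varepsilon)^n\leq (g_+)^n-\varepsilon'$ cannot hold with a uniform $\varepsilon'$. The clean fix is $a^n-b^n\geq(a-b)^n$ for $a\geq b\geq 0$, which gives $(g_+)^n-\det(D^2\varphi)=(g_+)^n-(\det(D^2\varphi)^{1/n})^n\geq (g_+-L_u\varphi)^n\geq\varepsilon^n$ on the set where $D^2\varphi\in\coneb_n$ and $L_u\varphi\leq g_+-\varepsilon$; off that set $F_{\MA}(D^2\varphi)=-\infty$. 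Hence $F_{\MA}(D^2\varphi)\leq (g_+)^n-\varepsilon^n$ a.e.\ in $U$, with the uniform constant $\varepsilon^n>0$, so $\varphi$ is a legitimate test function in Theorem~\ref{thm rho}'s sense and the contradiction follows. A secondary technical point is to confirm, once more from Remark~\ref{rem hyp comp with MA}, that $(G^{i\bar j}(z,(D^2u)(z)))_{1\leq i,j\leq n}$ is Hermitian and nonnegative for a.e.\ $z$, so that $L_u$ is of the form covered by our $W^{2,r}/L^p$-viscosity framework; this is immediate from ellipticity of $G$, but I would state it explicitly since it is needed for the statement of the lemma to even make sense in the context of Theorem~\ref{linmaxprin} used later.
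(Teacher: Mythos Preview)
Your proof is correct and takes essentially the same approach as the paper: both use the chain $L_u\varphi \geq G(z,D^2\varphi) \geq (\det D^2\varphi)^{1/n}$ on the set where $D^2\varphi\in\coneb_n$ to transfer the test-function condition for $L_u$ to one for $F_{\MA}$, and then invoke the viscosity property of $\rho$ for $F_{\MA}(D^2\rho)\geq(g_+)^n$. The only cosmetic difference is that the paper argues by contradiction (the set $V_\eta=\{F_{\MA}(D^2\varphi)>(g_+)^n-\eta\}$ must have positive measure, and there $L_u\varphi\geq g_+-\eta^{1/n}$ by subadditivity of $t\mapsto t^{1/n}$), whereas you argue directly using the equivalent inequality $a^n-b^n\geq(a-b)^n$ to get $F_{\MA}(D^2\varphi)\leq(g_+)^n-\varepsilon^n$ a.e.\ in $U$.
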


\begin{proof}
Assume not.
Then, there exist $\varphi\in W^{2,r}_{\loc}(B_1)$, $\varepsilon>0$ and an open subset $U \subset B_1$ such that, in $U$,
\begin{equation}\label{non visco}
L_u \varphi
\leq g_+ -\varepsilon,
\end{equation}
and $\rho-\varphi$ has a strict local maximum in $U$.
Since $\rho$ is a $W^{2,r}/L^{p/n}$-viscosity solution of $F_{\MA}(\DD \rho) \geq (g_+)^n$ in $B_1$, by very definition we obtain that, for every $\eta>0$, the set
$$V_\eta=\ens{z \in U \quad \middle| \quad F_{\MA}(\DD \varphi(z))> (g_+(z))^n-\eta},$$
has to be of positive measure.
In particular, note that $\DD\varphi \in \coneb_n$ a.e. in $V_\eta$ since $F_{\MA}$ is equal to $-\infty$ outside $\coneb_n$ by definition.
In this set $V_\eta$, we have
\begin{align*}
L_u \varphi
&\geq  G\left(z,\DD \varphi\right) \quad \text{(by \eqref{basicineq})}, \\
&\geq \left(\det (\DD \varphi)\right)^{\frac{1}{n}} \quad \text{(by \eqref{comparison with det} since $\DD\varphi \in \coneb_n$)}, \\
&\geq g_+-\eta^{\frac{1}{n}} \quad \text{(by definition of $V_\eta$ and subadditivity of $x \longmapsto x^{\frac{1}{n}}$)}.
\end{align*}
As $\varepsilon>0$ is fixed, by taking $\eta>0$ small enough we reach a contradiction with \eqref{non visco}.

\end{proof}

We can now prove our main result:

\begin{proof}[Proof of Theorem \ref{main thm}]
By homogeneity, we have
$$L_u u=G(z,\DD u).$$
Therefore,
$$L_u (-u)=-G(z,\DD u) \geq -g \geq -g_+.$$
From Lemma \ref{w visc sol} and Remark \ref{sum visc sol} we see that $\rho-u$ is then a $W^{2,r}/L^p$-viscosity solution to
$$L_u\left(\rho-u\right) \geq 0 \quad \mbox{ in } \Omega.$$

We can check that $\aij(z)=G^{i\bar{j}}(z,(\DD u)(z))$ satisfy the assumptions of Theorem \ref{linmaxprin}.
Indeed, the condition \eqref{cond max PP} follows from \eqref{basicineq} (with $B=\Id$) and the fact that $G(z,\Id) \geq 1$ (by \eqref{comparison with det}); the nonnegativity of $(a^{i\bar{j}}(z))_{1 \leq i,j \leq n}$ follows from the ellipticity of $A \mapsto G(z,A)$.
Therefore, using this maximum principle, we obtain that (recall that $\rho \leq 0$ in $\clos{B_1}$ and $u \geq 0$ on $\partial\Omega$)
$$\rho-u \leq 0 \quad \mbox{ in } \Omega.$$
The desired estimate \eqref{max estim nonlin} then follows from Ko\l odziej's estimate \eqref{estim rho} of $\rho$.
\end{proof}

\section*{Acknowledgements}

The first and third named authors were partially supported by the Polish National Science Centre Grant 2017/26/E/ST1/00955.
The second named author was partially supported by the  Polish National Science Centre Grant 2017/27/B/ST1/01145.

\bibliographystyle{amsalpha}
\bibliography{biblio}

\end{document}